\newcommand{\sz}{Szemer\'edi}
\newcommand{\er}{Erd\H{o}s }
\newcommand{\ud}{\overline{d}}
\newcommand{\uboxd}{\overline{\dim_{\mathrm{B}}}}
\newcommand{\Haus}{\dim_{\mathrm{H}}}
\newtheorem*{thm*}{Theorem}
\newtheorem*{defn*}{Definition}
\newtheorem*{conj*}{Conjecture}
\newtheorem{thm}{Theorem}[section]
\newtheorem{lma}[thm]{Lemma}
\newtheorem{defn}[thm]{Definition}
\newtheorem{prop}[thm]{Proposition}
\newtheorem{conj}[thm]{Conjecture}
\newtheorem{rem}[thm]{Remark}
\newtheorem{ques}[thm]{Question}
\newtheorem{exm}[thm]{Example}
\begin{document}

\title{Erd\H{o}s Semi-groups, arithmetic progressions and Szemer\'edi's theorem}

\author{Han Yu}
\address{Han Yu\\
School of Mathematics \& Statistics\\University of St Andrews\\ St Andrews\\ KY16 9SS\\ UK\\}
\curraddr{}
\email{hy25@st-andrews.ac.uk}
\thanks{}

\subjclass[2010]{Primary: 11B25, 37A45 Secondary: 28A80}

\keywords{Hausdorff dimension, sum sets, \sz's theorem}

\date{}

\dedicatory{}

\begin{abstract}
In this paper we  introduce and study a certain type of sub semi-group of $\mathbb{R}/\mathbb{Z}$ which turns out to be closely related to \sz's theorem on arithmetic progressions.
\end{abstract}

\maketitle

\section{two motivating problems}
\sz's theorem on arithmetic progressions is perhaps one of the most interesting topics in mathematics. There are a lot of materials on this topic, in particular see \cite{SZ}, \cite{Fu}, \cite{Go}. One of the reasons for \sz's theorem being popular is that it has several proofs with very different backgrounds. The aim of this paper is to introduce another point of view for \sz's theorem. 

For fractal dimensions and arithmetic structures, there are some recent results, see for example \cite{FY1}, \cite{FY2}. Here we adopt a different but related approach. We will study the set of numbers in $[0,1)$ whose binary digit expansion does not have arbitrarily long arithmetic progressions of positions of digit $1$. 
\begin{defn}\label{DEF1}
	We say $x\in [0,1)$ is Erd\H{o}sian, if the binary expansion of $x$ does not contain arbitrarily long arithmetic progressions of positions of digit $1$. The collection of all Erd\H{o}sian numbers is a subset of $[0,1)$ and we call it the Erd\H{o}s set $E$.
\end{defn}
Here for any $x\in \{0,1\}^\mathbb{N}$, the "positions of digit $1$" is the following subset of $\mathbb{N}$,
\[
A_x=\{n\in\mathbb{N}: x_n=1\}.
\]
In this paper we mainly discuss the arithmetic sumset $E+E=\{a+b: a,b\in E\}$ of $E$ where the addition is taken with $\mod 1$. However, during the study of the Erd\H{o}s set $E$, we found it interesting (and probably harder) to study the product set $EE=\{ab: a,b\in E  \}$. Here consider $E$ as a subset of $[0,1)$ and we take the product in $\mathbb{R}$. We ask here the following question which may be interesting on its own.
 \begin{ques}\label{Q1}
 	Are there any numbers $x,y\in E$ such that $xy\notin E$?
 \end{ques}
We shall discuss the above question further in Section \ref{RE}. In particular we will see that if $EE\subset E$ then the \sz's theorem follows. This gives some motivation of the above question. To study the \er set $E$, we introduce \er semi-groups or ESGs in $\mathbb{R}/\mathbb{Z}$, see Section \ref{seErdos}. In particular we shall prove that $E$ is an ESG. One of our main motivation is the following conjecture which implies \sz's theorem.
\begin{conj}\label{CON1}
	If $E$ is an ESG with full Hausdorff dimension then $E=\mathbb{R}/\mathbb{Z}$. 
\end{conj}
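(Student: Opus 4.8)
The plan is to exploit two structures on an Erd\H{o}s semi-group $E$ at once: its additive closure as a sub semi-group of $\mathbb{R}/\mathbb{Z}$, and the self-similarity of the defining digit condition under the doubling map $T\colon x\mapsto 2x\bmod 1$, for which $E$ is essentially invariant, since shifting the positions of the $1$'s by a constant neither creates nor destroys long arithmetic progressions. First I would recast the dimension hypothesis dynamically. Writing $E$ as the increasing union of the subshifts $E_k$ of sequences whose $1$-positions avoid $k$-term progressions, each $E_k$ is a closed $T$-invariant set, so the variational principle gives $\dim_{\mathrm{H}} E=\sup_k h_{\mathrm{top}}(E_k)/\log 2=\sup_\mu h_\mu(T)/\log 2$ over $T$-invariant measures carried by (the symbolic image of) $E$. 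Thus \emph{full} Hausdorff dimension is equivalent to the existence of $T$-invariant measures $\mu_n$ on $E$ with $h_{\mu_n}(T)\to\log 2$, and the target $E=\mathbb{R}/\mathbb{Z}$ should follow by showing such measures force full Lebesgue measure and that a semi-group of full measure is everything.

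Next I would bring in the semi-group structure through a rigidity principle. A \emph{closed} sub semi-group of the compact group $\mathbb{R}/\mathbb{Z}$ is automatically a subgroup, because the orbit closure $\overline{\{nx\}}_{n\ge 1}$ of any element is a closed subgroup and hence contains both $0$ and $-x$; so a closed sub semi-group is either finite cyclic or all of $\mathbb{R}/\mathbb{Z}$. Were $E$ closed, full dimension (which rules out finiteness) would immediately give $E=\mathbb{R}/\mathbb{Z}$. The genuine content of the conjecture is therefore to handle the fact that an ESG need not be closed: one must show that the additive closure $E+E\subseteq E$, combined with the high-entropy measures $\mu_n$ of the first step, drives a density- or entropy-increment scheme that propagates a positive frequency of digit-$1$ positions upward through scales until the set fills out $\mathbb{R}/\mathbb{Z}$. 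The product route is parallel: since $EE\subseteq E$ already implies \sz's theorem (Section \ref{RE}), one could instead try to manufacture enough of the product structure out of full dimension.

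The step I expect to be the main obstacle is precisely this increment. A measure of entropy near $\log 2$ forces, on a set of large measure, a positive upper density of digit-$1$ positions; to contradict the defining property of an ESG one must then deduce that positive density of positions produces arbitrarily long arithmetic progressions among them — which is exactly \sz's theorem. Because the conjecture is designed to imply \sz's theorem, no strictly softer argument can exist, and I expect an honest proof to transport a known proof of \sz (Furstenberg-style multiple recurrence applied to the $\mu_n$, or the density-increment/regularity method) into this fractal semi-group language. The delicate technical point will be uniformity: one needs the failure of $E=\mathbb{R}/\mathbb{Z}$ to create an entropy deficit bounded away from $\log 2$ uniformly over the approximating subshifts $E_k$, so that the variational principle of the first step genuinely forces $\dim_{\mathrm{H}} E<1$ rather than merely $\dim_{\mathrm{H}} E\le 1$.
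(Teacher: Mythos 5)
You are attempting to prove Conjecture \ref{CON1}; the paper offers no proof of it --- it is stated as an open conjecture, and the author explicitly records that it cannot be deduced even from Szemer\'edi's theorem. Your text is a research programme rather than a proof, and you say so yourself: the central step --- an entropy- or density-increment argument showing that a non-closed sub semi-group of full dimension must fill out $\mathbb{R}/\mathbb{Z}$ --- is named as ``the main obstacle'' and is never carried out. That step is the entire content of the conjecture, so the gap is total. The two steps you do execute are correct but do not advance matters: the identity $\Haus F_i = h_{\mathrm{top}}(F_i)/\log 2$ for closed $\times 2 \bmod 1$ invariant sets, and the fact that a \emph{closed} sub semi-group of $\mathbb{R}/\mathbb{Z}$ is a closed subgroup (hence finite cyclic or everything), are both true; but an ESG is only a countable union of such closed pieces and need not be closed, and full Hausdorff dimension only means $\sup_i h_{\mathrm{top}}(F_i)=\log 2$ with the supremum possibly not attained, so no single $F_i$ need carry a measure of maximal entropy and nothing in your argument forces positive Lebesgue measure, let alone $E=\mathbb{R}/\mathbb{Z}$.

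Two further concrete problems. First, you repeatedly reason about ``positions of digit $1$ avoiding $k$-term progressions''; that describes the particular Erd\H{o}s set of Definition \ref{DEF1}, whereas Conjecture \ref{CON1} quantifies over \emph{all} ESGs in the sense of Definition \ref{ER}, whose pieces $F_i$ are arbitrary closed $\times 2 \bmod 1$ invariant sets subject only to conditions (2) and (3); a proof must work at that level of generality. Second, your closing strategy --- transport a known proof of Szemer\'edi's theorem into this setting --- is not a complete outline even in principle: Szemer\'edi's theorem shows the specific Erd\H{o}s set has Hausdorff dimension zero (Theorem \ref{Thmsum}), which makes the conjecture vacuous for that one example but says nothing about a hypothetical full-dimensional ESG of a different shape. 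This is precisely why the paper notes that the conjecture does not follow from Szemer\'edi's theorem. Your observation that the conjecture is at least as hard as Szemer\'edi's theorem is a correct lower bound on the difficulty, not a proof.
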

We note here that neither Question \ref{Q1} nor Conjecture \ref{CON1} can be deduced by \sz's theorem.

\section{Preliminaries}
\subsection{Densities of integer sequences}
Let $A\subset\mathbb{N}$ be a sequence of integers and denote
\[
A(n)=\#\{i\in [1,n]: i\in A\},
\]
where we use the notation $\#B$ for the cardinality of set $B$. Now we recall two notions of density for integer sequences.
\begin{defn}
	The upper natural density of $A$ is defined as
	\[
	\overline{d}(A)=\limsup_{n\to\infty} \frac{A(n)}{n}.
	\]
	Similarly, we define the lower natural density by replacing the above $\limsup$ with $\liminf$.
\end{defn}

\begin{defn}
	The upper Banach density of $A$ is defined as
	\[
	d_B(A)=\limsup_{k,M\to\infty} \frac{1}{k}(A(M+k-1)-A(M)).
	\]
\end{defn}

The upper and lower natural densities have their fractal dimension counterparts as upper and lower box dimensions. The lower natural density is often also related to the Hausdorff dimension whereas the upper Banach density is related to the Assouad dimension.

\subsection{The Hausdorff dimension}
In this paper we will mostly work with the Hausdorff and packing dimensions. For other notions of dimensions see \cite{Fa}, \cite{Ma}. Let $n\geq 1$ be an integer. Let $A\subset \mathbb{R}^n$ be a Borel set. For any $s\in\mathbb{R}^+$ and any $\delta>0$ define the following quantity
\[
\mathcal{H}^s_\delta(A)=\inf\left\{\sum_{i=1}^{\infty}(\mathrm{diam} (U_i))^s: \bigcup_i U_i\supset A, \mathrm{diam}(U_i)<\delta\right\},
\]
where $\mathrm{diam}(B)$ denotes the diameter of a set $B\subset\mathbb{R}^n$.
Then the $s$-Hausdorff measure of $A$ is
\[
\mathcal{H}^s(A)=\lim_{\delta\to 0} \mathcal{H}^s_{\delta}(A).
\]
The Hausdorff dimension of $A$ is
\[
\Haus A=\inf\{s\geq 0:\mathcal{H}^s(A)=0\}=\sup\{s\geq 0: \mathcal{H}^s(A)=\infty\}.
\]
\subsection{The packing dimension}
For the proof of Theorem \ref{Thmsum} we also need the notion of packing dimension. Let $A\subset\mathbb{R}$ be bounded and we use $N(A,r)$ to denote the minimum covering number for $A$ with intervals with length $r>0$. Then the upper box dimension of $A$ is defined as
\[
\uboxd A=\limsup_{r\to\infty} -\frac{\log N(A,r)}{\log r}.
\]
The packing dimension of $A$ is defined as follows
\[
\dim_P A=\inf\left\{\sup_{i\in\mathbb{N}}\{\uboxd A_i\}: A\subset \bigcup_{i} A_i\right\}.
\]
We know that in general for any Borel set $A\subset\mathbb{R}$
\[
\Haus A\leq \dim_P A.
\]
For any sequence $A_i\subset\mathbb{R}$ of Borel sets we also have the following equality which is usually known as the countably stability of dimensions,
\[
\Haus (\bigcup_{i} A_i)=\sup_{i} \Haus A_i,
\]
\[
\dim_P (\bigcup_{i} A_i)=\sup_{i} \dim_P A_i.
\]
Another property we shall use is that for any two Borel sets $A,A'\subset\mathbb{R}$,
\[
\Haus (A\times A')\leq \Haus A+\dim_P A'.
\]
This is a special case of \cite[Theorem 8.10]{Ma}. One way we shall use this result is that whenever $\dim_P A=0$ we have
\[
\Haus(A+A)=0 \text{ and } \Haus(AA)=0,
\]
where
\[
A+A=\{a+b: a,b\in A\}, AA=\{ab: a,b\in A  \}.
\]
The result for $A+A$ is clear because $A+A$ can be identified with a certain projection of $A\times A$. The result for $AA$ follows similarly because it is almost equal to
\[
\log \exp (\log A+\log A).
\]
The problem is that $A$ can contain non-positive elements in this case $\log(.)$ is not well defined. This problem can be addressed by decomposing $A$ into $A_{-}=A \cap [-\infty,0)$ and $A_+=A\cap [0,\infty)$ then
\[
AA=A_-A_-\cup A_+A_-\cup A_+A_+.
\]
Another issue is that $f(x)=\log(x)$ is not Lipschitz at $x=0$. In fact we can assume $0\notin A$ since this will not affect $AA$ other than removing the element $0$. Now we consider the decomposition
\[
A_-=\bigcup_{i\in\mathbb{N}}A\cap (-\infty,-2^{-i}], A_+=\bigcup_{i\in\mathbb{N}}A\cap [2^{-i},\infty). 
\]
We see that as a result
\[
AA=\bigcup_{i,j} A_i A_j
\]
where $A_i, A_j$ are both one of the sets in the above decomposition. Assume without loss of generality that $A_i, A_j$ are both contained in $(0,\infty)$. Now $A_i, A_j$ are away from $0$, therefore we can perform the $\log(.)$ function and convert the product set to sum set and the result follows because the Hausdorff and packing dimensions are countably stable.
\subsection{Iterated convolutions}

 Later we shall need an important result in \cite{LMP}.

\begin{thm}\label{ThmLMP}
	Let $E\subset [0,1)$ be a closed $\times p \mod 1$ invariant set. If $\Haus E>0$, then
	\[
	\lim_{n\to\infty} \Haus \left(\sum_{i=1}^n E\right)= 1.
	\]
\end{thm}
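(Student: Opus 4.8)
The plan is to deduce this from the entropy theory of $\times p \bmod 1$ invariant measures, following the framework of \cite{LMP}. Write $T_p(x)=px\bmod 1$, so that $E$ is a closed $T_p$-invariant set. The first step is to produce a single well-behaved measure living on $E$. Coding points of $[0,1)$ by their base-$p$ expansions identifies $T_p$ with the shift on a closed shift-invariant subset $X\subseteq\{0,\dots,p-1\}^{\mathbb N}$, and for such digit sets one has $\Haus E=h_{\mathrm{top}}(X)/\log p$. Since $\Haus E>0$ this topological entropy is positive, so by the variational principle (together with expansiveness of the shift, which guarantees a measure of maximal entropy) there is a $T_p$-invariant Borel probability measure $\mu$ supported on $E$ with $h_\mu(T_p)>0$; passing to an ergodic component we may take $\mu$ ergodic with positive entropy.

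The second step converts a statement about the measure into one about the set. Let $\mu^{*n}=\mu*\cdots*\mu$ denote the $n$-fold convolution on the circle. Since the support of a convolution is the (mod $1$) sum of the supports, $\mathrm{supp}(\mu^{*n})\subseteq \sum_{i=1}^n E$, and therefore
\[
\Haus\Big(\sum_{i=1}^n E\Big)\ \ge\ \Haus\big(\mathrm{supp}\,\mu^{*n}\big)\ \ge\ \underline{\dim}\,\mu^{*n},
\]
where $\underline{\dim}$ denotes the lower Hausdorff dimension of a measure, i.e.\ the essential infimum of its lower local dimensions; the last inequality is the mass distribution principle applied to the full-measure set $\mathrm{supp}\,\mu^{*n}$. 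Thus it suffices to prove that $\underline{\dim}\,\mu^{*n}\to 1$ as $n\to\infty$.

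The third step is the deep input. Because $\mu$ is $T_p$-invariant with $h_\mu(T_p)>0$, the main estimates of \cite{LMP} show that the self-convolutions become asymptotically uniform, in the quantitative sense that the base-$p$ partition entropies satisfy $H(\mu^{*n},\mathcal P_k)/(k\log p)\to 1$ with control that is uniform across the relevant range of scales $k$. Feeding this into the relationship between $p$-adic partition entropy and local dimension (via a Shannon–McMillan–Breiman / martingale argument on the level-$k$ $p$-adic intervals) yields $\underline{\dim}\,\mu^{*n}\to 1$. Combined with the trivial bound $\Haus(\sum_{i=1}^n E)\le 1$, this gives the claimed limit.

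The main obstacle is precisely this last passage. The measures $\mu^{*n}$ are not themselves $T_p$-invariant, so one cannot simply invoke the identity ``$\dim = $ entropy$/\log p$''; moreover a large \emph{average} partition entropy only forces a lower bound on the box dimension of the support, not directly on the Hausdorff dimension of the measure, since local dimensions may fluctuate. The strength of \cite{LMP} is that it controls the partition entropy at every scale with uniform increments, which is exactly what is needed to run a Frostman-type mass distribution argument and promote the crude box-counting lower bound to a genuine lower bound on $\underline{\dim}\,\mu^{*n}$, and hence on the Hausdorff dimension of the sumset.
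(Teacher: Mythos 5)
This theorem is not proved in the paper at all: it is imported verbatim from \cite{LMP} (where it appears as a corollary of the main theorem on entropy of convolutions), so there is no in-paper argument to compare yours against. Judged on its own terms, your reduction of the set statement to the entropy statement of \cite{LMP} is the right strategy and is essentially the one carried out in \cite{LMP} itself: Furstenberg's formula $\Haus E = h_{\mathrm{top}}(E,T_p)/\log p$ for closed $\times p$-invariant sets, the variational principle to extract an ergodic invariant $\mu$ on $E$ with $h_\mu(T_p)>0$, the inclusion $\mathrm{supp}(\mu^{*n})\subseteq\sum_{i=1}^n E$, and the deep input $h(\mu^{*n})\to\log p$, which you correctly attribute to \cite{LMP} rather than attempt to reprove.

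However, your diagnosis of the ``main obstacle'' rests on a false premise: $\mu^{*n}$ \emph{is} $T_p$-invariant. Since $T_p$ is a group endomorphism of $\mathbb{R}/\mathbb{Z}$, pushforward commutes with convolution, $(T_p)_*(\mu*\nu)=(T_p)_*\mu * (T_p)_*\nu$, and this is exactly what makes ``entropy of convolutions'' of invariant measures a well-posed quantity in \cite{LMP}. The genuine difficulty in your last step is different: $\mu^{*n}$ need not be \emph{ergodic}, so a large Kolmogorov--Sinai entropy only controls the average over ergodic components and does not by itself force a large essential infimum of local dimensions; your passage from partition entropy to $\underline{\dim}\,\mu^{*n}$ via Shannon--McMillan--Breiman therefore has a real gap. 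The clean fix avoids local dimensions entirely: $\sum_{i=1}^n E$ is itself a closed $\times p$-invariant set, so Furstenberg's formula gives $\Haus\bigl(\sum_{i=1}^n E\bigr)=h_{\mathrm{top}}\bigl(\sum_{i=1}^n E\bigr)/\log p$, and the variational principle applied to the invariant (not necessarily ergodic) measure $\mu^{*n}$ supported there yields $\Haus\bigl(\sum_{i=1}^n E\bigr)\ge h(\mu^{*n})/\log p\to 1$. With that substitution your outline becomes a correct derivation of the stated theorem from the main result of \cite{LMP}.
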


Let $T: [0,1)\to [0,1)$ be such that $T(x)=px \mod 1$. Then we say that a set  $E\subset [0,1)$ is $\times p\mod 1$-invariant if $TE\subset E$. From the above theorem we see that if there exists a Borel set $E_{\infty}\subset [0,1)$ such that for all integer $n\geq 1$ we have $\sum_{i=1}^n E\subset E_{\infty}$  then
\[
\Haus E_{\infty}=1.
\]
For general subsets of $[0,1]$ which might not be Borel nor $\times p \mod 1$ invariant, we can still study the sum set and iterated sum set. This topic is discussed in a forthcoming paper \cite{FHY}. 

\subsection{Van der Waerden's theorem}

The next important ingredient is van der Waerden's theorem \cite{VW}. We shall state it in a form which will be used directly.
\begin{thm}[Van der Waerden]\label{VAN}
	Let $A_1,A_2,A_3\subset\mathbb{N}$ be three sequences of integers. Let $i,j,k$ be three integers greater than $2$. Suppose that $A_1$ does not contain any $i$-term arithmetic progressions, $A_2$ does not contain any $j$-term arithmetic progressions and $A_3$ does not contain any $k$-term arithmetic progressions. Then there is a number $W(i,j,k)$ such that $A_1\cup A_2\cup A_3$ does not contain any $W(i,j,k)$-term arithmetic progressions.
\end{thm}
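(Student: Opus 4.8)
The plan is to deduce this statement from the classical form of van der Waerden's theorem, which asserts that for any number of colours $r$ and any target length $\ell$ there is an integer $W(r,\ell)$ so that every $r$-colouring of $\{1,2,\dots,W(r,\ell)\}$ contains a monochromatic arithmetic progression of length $\ell$. Accordingly I would set $\ell=\max\{i,j,k\}$ and define the desired quantity to be $W(i,j,k)=W(3,\ell)$, the van der Waerden number for three colours and target length $\ell$. The content of the statement is then purely a colouring reformulation, so the work is to translate set membership into a colouring and read off the conclusion.

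First I would argue by contradiction and suppose that $A_1\cup A_2\cup A_3$ contains an arithmetic progression $P=\{a,a+d,\dots,a+(W(i,j,k)-1)d\}$ of length $W(i,j,k)$. The affine bijection $t\mapsto a+td$ identifies $\{0,1,\dots,W(i,j,k)-1\}$ with $P$ and carries arithmetic progressions to arithmetic progressions; hence it suffices to work on the index set $\{0,1,\dots,W(i,j,k)-1\}$, which has exactly $W(3,\ell)$ elements.

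Next I would colour each index by set membership: assign to an index the smallest $m\in\{1,2,3\}$ for which the corresponding term of $P$ lies in $A_m$, which is well defined since $P\subset A_1\cup A_2\cup A_3$. This gives a $3$-colouring of a set of size $W(3,\ell)$, so by the classical theorem it contains a monochromatic arithmetic progression of length $\ell$. Pulling this progression back through the affine map produces a length-$\ell$ arithmetic progression contained entirely in a single $A_m$. Since $\ell\geq i,j,k$, its first $i$ (respectively $j$, $k$) terms form an $i$-term (respectively $j$-, $k$-term) arithmetic progression inside $A_1$ (respectively $A_2$, $A_3$), contradicting the hypothesis on that set.

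Once the classical theorem is granted, there is no serious obstacle; the only points requiring care are the verification that the affine identification preserves progression structure, so that monochromatic progressions on the index set correspond to genuine progressions inside $P$, and the handling of possible overlaps among the $A_m$, which is why I fix the priority rule of choosing the least admissible colour. The hypothesis $i,j,k>2$ plays no role beyond ensuring that the progressions in question are nondegenerate. Thus the statement is exactly van der Waerden's theorem specialised to $r=3$ colours and target length $\ell=\max\{i,j,k\}$.
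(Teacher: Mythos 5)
Your derivation is correct. Note that the paper does not actually prove Theorem \ref{VAN}: it quotes it as a known consequence of van der Waerden's theorem, citing [vdW27], so there is no in-paper argument to compare against. Your reduction is the standard one and is sound: the key move is to observe that if $A_1\cup A_2\cup A_3$ contained a progression $P$ of length $W(3,\ell)$ with $\ell=\max\{i,j,k\}$, then colouring the index set $\{0,\dots,W(3,\ell)-1\}$ of $P$ by (least) set membership and applying the classical finite colour version produces a monochromatic $\ell$-term progression, which pulls back through the affine map $t\mapsto a+td$ to a genuine progression inside a single $A_m$, contradicting the hypothesis on that set. The only cosmetic difference from what the paper's notation suggests is that you take the ``diagonal'' bound $W(3,\max\{i,j,k\})$ rather than the off-diagonal van der Waerden number in which colour $1$ is only required to yield an $i$-term progression, colour $2$ a $j$-term one, and colour $3$ a $k$-term one; both give a valid (finite) value of $W(i,j,k)$, yours being simply a weaker numerical bound. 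Your handling of overlaps via the least-index priority rule and your check that the affine identification preserves progression structure are exactly the points that need care, and you address both.
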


\subsection{Sets defined by digit expansions}

 We will also need the following lemma proved in \cite[Example 1.3.2 and 1.4.2]{BP}.

\begin{lma}\label{lmaDim}
	Let $S=s_1s_2\dots$ be an infinite sequence of $\{0,1\}$ and the positions of digit $1$, when viewed as a subset of $\mathbb{N}$, has lower natural density $\alpha>0$. Let
	\[
	E_S=\left\{x\in [0,1): x=\sum_{i=1}^{\infty} x_i2^{-i}, x_i\in \{0,1\} \text{ if } s_i=1\text{ and } x_i=0 \text{ if } s_i=0\right\}.
	\]
	Then we have
	\[
	\Haus E_S= \alpha.
	\]
\end{lma}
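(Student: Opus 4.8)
The plan is to realize $E_S$ as a Cantor-type set with an explicit cylinder structure, then bound its Hausdorff dimension from above and below separately. Write $P=\{i:s_i=1\}$ and $P(n)=\#(P\cap[1,n])$, so the hypothesis is precisely $\liminf_{n\to\infty}P(n)/n=\alpha$. For each $n$ the set $E_S$ is contained in a union of $2^{P(n)}$ pairwise disjoint half-open intervals of length $2^{-n}$ (the level-$n$ cylinders), obtained by fixing the first $n$ binary digits, with the digits at positions $i\le n$, $s_i=0$, forced to be $0$ and the remaining $P(n)$ digits free.

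For the upper bound $\Haus E_S\le\alpha$ I would use these cylinders as covers. For any $s>\alpha$ and any $\delta>0$, the definition of lower density furnishes arbitrarily large $n$ with $2^{-n}<\delta$ and $P(n)/n$ as close to $\alpha$ as we like, in particular $P(n)<sn$. Covering $E_S$ by the corresponding $2^{P(n)}$ cylinders gives $\mathcal H^s_\delta(E_S)\le 2^{P(n)}(2^{-n})^s=2^{P(n)-sn}\to 0$ along such $n$. The crucial point, which distinguishes Hausdorff from box dimension, is that $\mathcal H^s_\delta$ requires only a single efficient scale below $\delta$, so the $\liminf$ in the density is exactly what is needed; letting $\delta\to0$ yields $\mathcal H^s(E_S)=0$ and hence $\Haus E_S\le s$ for every $s>\alpha$.

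For the lower bound $\Haus E_S\ge\alpha$ I would apply the mass distribution principle to the natural measure $\mu$ on $E_S$ that assigns each free digit the values $0$ and $1$ with probability $1/2$ independently (and mass $1$ to the forced value $0$). Then every level-$n$ cylinder has $\mu$-measure exactly $2^{-P(n)}$. Given $x\in E_S$ and $r$ with $2^{-(n+1)}\le r<2^{-n}$, the ball $B(x,r)$ has length less than $2\cdot 2^{-n}$ and so meets at most three level-$n$ cylinders, whence $\mu(B(x,r))\le 3\cdot 2^{-P(n)}$. Fixing $s<\alpha$, for all large $n$ we have $P(n)>sn$, so $\mu(B(x,r))\le 3\cdot 2^{-sn}\le C\,r^{s}$ uniformly in $x$. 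The mass distribution principle then gives $\Haus E_S\ge s$, and letting $s\to\alpha$ completes the bound.

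The main obstacle lies entirely in the lower-bound bookkeeping: one must verify the bounded-overlap estimate $\mu(B(x,r))\le 3\cdot 2^{-P(n)}$, so that balls, and not merely cylinders, are controlled, and confirm that converting the cylinder measure $2^{-P(n)}$ into a power $r^{s}$ needs only $\liminf_{n} P(n)/n\ge s$, i.e. the lower density rather than the upper one. Everything else—the disjointness and length of the cylinders, the monotonicity of $\mathcal H^s_\delta$ in $\delta$, and the passages to the limit in $s$—is routine, and the two bounds together give $\Haus E_S=\alpha$.
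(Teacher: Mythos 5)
Your proof is correct: the covering argument at the good scales provided by the $\liminf$ gives the upper bound, and the Bernoulli measure on the free digits together with the bounded-overlap estimate and the mass distribution principle gives the lower bound, with the lower density entering exactly where you say it does. The paper does not prove this lemma itself but cites it from Bishop and Peres, and your argument is essentially the standard one found there, so there is nothing to add.
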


 \section{\er semi-groups}\label{seErdos}
 In this section we introduce \er semi-groups which will be the main topic of this paper. Let $x=x_1x_2\dots$ be an infinite sequence over $\{0,1\}$ and  with a possible abuse of the notation we denote $x$ to be the following real number as well
 \[
 x=\sum_{i=1}^{\infty}2^{-i} x_i.
 \]
 Such an association is not one-one, however, the only two to one situations happen when the real number $x$ is a dyadic rational number.  In this case, $x$ has a binary expansion with only finitely many digits $1$ and we associate $x$ only with this sequence. We will use this identification of $0,1$ sequences with real numbers in $[0,1)$. Given two numbers $x,y\in [0,1)$, we write their binary expansion again as $x,y\in \{0,1\}^{\mathbb{N}}$. We write $x_i$ with $i\in\mathbb{N}$ to be the $i$-th digit of the binary expansion of $x$. We call $y\in\mathbb{R}^+$ a subsequence of $x\in\mathbb{R}^+$ in terms of binary expansion if there exists $N>0$ such that $\forall i\in\mathbb{N},y_{i+N}=1\implies x_i=1$ and $y_{j}=0$ for all $j\leq N-1$.  In this paper we shall identify $\mathbb{R}/\mathbb{Z}$ naturally with $[0,1)$.
 
 \begin{defn}\label{ER}
 	Let $E\subset \mathbb{R}/\mathbb{Z}$ be a set with the following properties:
 	\begin{itemize}
 		\item[1]: $E=\bigcup_{i\in\mathbb{N}} F_i$ where each $F_i$ is a closed $\times 2\mod 1$ invariant set.
 		\item[2]: There is a function $W:\mathbb{N}^2\to\mathbb{N}$ such that $\forall i,j\in\mathbb{N}$, $F_i+F_j\subset F_{W(i,j)}$.
 		\item[3]: For all integer $i$, if $x\in F_i$, then $y\in F_i$ for any subsequence $y$ of $x$ in terms of binary expansion.
 	\end{itemize}
 	We shall call such a set $E$ a binary \er semi-group (or simply an ESG) in $\mathbb{R}/\mathbb{Z}$.
 \end{defn}
 
 It is also possible to define $k$-ary \er semi-groups for $k\geq 3$ but we will not need this generalization in this paper. The first observation is that an ESG is a sub semi-group of $\mathbb{R}/\mathbb{Z}$. Indeed, given $x,y\in E$ we see that there exist integers $i,j$ such that $x\in F_i, y\in F_j$ then $x+y\in F_{W(i,j)}\subset E$. Based on this observation we can show the following result.
 
 \begin{prop}\label{PR01}
 	If $E\subset\mathbb{R}/\mathbb{Z}$ is an ESG, then
 	\[
 	\Haus E\in\{0,1\}.
 	\]
 \end{prop}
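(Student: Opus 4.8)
The plan is to show the dichotomy by ruling out the middle: if $\Haus E > 0$, then in fact $\Haus E = 1$. So suppose $\Haus E > 0$. Since $E = \bigcup_{i} F_i$ and Hausdorff dimension is countably stable, $\sup_i \Haus F_i = \Haus E > 0$, hence there exists some index $i_0$ with $\Haus F_{i_0} > 0$. The key point is that $F_{i_0}$ is a closed $\times 2 \bmod 1$ invariant set with positive Hausdorff dimension, which is exactly the hypothesis of Theorem \ref{ThmLMP}.

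Next I would iterate the semi-group structure on a single piece. Set $F = F_{i_0}$ and consider the iterated sumsets $\sum_{k=1}^{n} F$. Using Property 2 of Definition \ref{ER} repeatedly, each finite sumset $\sum_{k=1}^{n} F$ is contained in some single $F_{j(n)}$, and therefore $\sum_{k=1}^{n} F \subset E$ for every $n$. Applying Theorem \ref{ThmLMP} to $F$ gives
\[
\lim_{n\to\infty} \Haus\left(\sum_{k=1}^{n} F\right) = 1.
\]
Since $\sum_{k=1}^{n} F \subset E$ for all $n$, monotonicity of Hausdorff dimension yields $\Haus E \geq \Haus\left(\sum_{k=1}^{n} F\right)$ for every $n$, and letting $n \to \infty$ forces $\Haus E = 1$. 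Combined with the trivial bound $\Haus E \leq 1$ (as $E \subset \mathbb{R}/\mathbb{Z}$), this gives $\Haus E = 1$, completing the dichotomy.

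The main obstacle to watch is the interface between the mod-$1$ sumset used in the ESG definition and the ordinary (real) iterated sumset appearing in Theorem \ref{ThmLMP}. I would handle this by identifying $\mathbb{R}/\mathbb{Z}$ with $[0,1)$ as set up in the text and noting that the reduction mod $1$ is locally an isometry off a measure-zero set of wrap-around points, so it does not decrease Hausdorff dimension; alternatively, one verifies that the statement of Theorem \ref{ThmLMP} is itself phrased for $\times p \bmod 1$ invariant sets and so already accommodates the modular addition. A secondary point worth checking is that Property 2 indeed propagates: from $F + F \subset F_{W(i_0,i_0)}$ one gets the containment $\sum_{k=1}^{n} F \subset E$ by a straightforward induction on $n$, with no uniform control on the indices $j(n)$ needed since we only require containment in the union $E$. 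Properties 1 and 3 beyond the invariance of $F_{i_0}$ are not needed for this argument.
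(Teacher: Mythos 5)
Your proposal is correct and follows essentially the same route as the paper's own proof: countable stability of Hausdorff dimension isolates a single closed $\times 2 \bmod 1$ invariant piece $F_{i_0}$ of positive dimension, Property 2 places every iterated sumset $\sum_{k=1}^{n} F_{i_0}$ inside $E$, and Theorem \ref{ThmLMP} then forces $\Haus E = 1$. Your extra remarks on the mod-$1$ versus real sumset and on the induction for Property 2 are sensible clarifications of points the paper leaves implicit.
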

 \begin{rem}
 	We can compare this result with another result in \cite{EV} which says that there exist Borel subgroup of $\mathbb{R}/\mathbb{Z}$ with any possible Hausdorff dimension in between $0$ and $1$.
 \end{rem}
 \begin{proof}
 	If for all integers $i$ we have $\Haus F_i=0$ then we see that $\Haus E=0$ because the Hausdorff dimension is countably stable. Otherwise there exists an integer $i$ such that $\Haus F_i>0$. Then for any integer $n$ we see that
 	\[
 	nF_i=\{x_1+\dots+x_n:x_1,\dots,x_n\in F_i\}\subset E.
 	\]
 	Because $F_i$ is a $\times 2 \mod 1$ invariant closed subset we see that $\Haus E=1$ by Theorem \ref{ThmLMP}.
 \end{proof}

 \begin{exm}
 	We see that $\mathbb{R}/\mathbb{Z}$ is a trivial example of an ESGs in $\mathbb{R}/\mathbb{Z}$.
 \end{exm}
 
 Another less trivial example will be discussed in the next section. For now we shall discuss some further properties of ESGs. By Proposition \ref{PR01} we see that the Hausdorff dimension of an ESG in $\mathbb{R}/\mathbb{Z}$ is either zero or one. We shall see that an ESG $E$ with $\Haus E=1$ satisfies a rather strong property.
 
 \begin{defn}
 	Let $E\subset\mathbb{R}^+$. We say that $E$ is a basis of order $2$ for interval $I$ if there exists an $ x\in E$ such that
 	\[
 	I\subset E+xE.
 	\]
 \end{defn}

\begin{thm}\label{MUL}
	If $E$ is an ESG with full Hausdorff dimension then there exists $b>0$ such that $E$, viewed as a subset of $[0,1)$, is a basis of order $2$ for $(0,b]$.
\end{thm}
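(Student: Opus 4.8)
By Proposition \ref{PR01}, full Hausdorff dimension forces some $F_i$ in the defining decomposition to satisfy $\Haus F_i>0$. Since $E$ is closed under addition, the iterated sumset $\sum_{k=1}^n F_i$ lies in $E$; it is again $\times 2\bmod 1$-invariant and subsequence-closed, and it sits inside some closed $F_j$ by property $2$. Applying Theorem \ref{ThmLMP}, I can choose $n$ so large that this $F_j$, call it $F$, is a closed, $\times 2\bmod 1$-invariant, subsequence-closed subset of $E$ with $\Haus F>1/2$. The remainder of the argument works entirely inside such an $F$.

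The key reduction uses property $3$ with $N=0$: if $z\in F$ has one-set $P=A_z$, then every binary sequence whose one-set is a subset of $P$ is a subsequence of $z$, so the full digit-Cantor set $E_P$ of Lemma \ref{lmaDim} satisfies $E_P\subseteq F\subseteq E$, with $\Haus E_P=\underline d(P)$. Thus it suffices to produce a single $z\in F$ whose one-set has positive lower density. I would obtain this by a counting argument: $\Haus F>1/2$ means $F$ has at least $2^{(1/2+\delta)n}$ distinct length-$n$ prefixes, whereas the number of binary strings of length $n$ with fewer than $\lambda n$ ones is at most $2^{H(\lambda)n}$; choosing $\lambda$ with $H(\lambda)<1/2+\delta$ forces, at infinitely many scales, prefixes with at least $\lambda n$ ones. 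A recurrence/limiting argument exploiting the $\times 2\bmod 1$-invariance of $F$ (equivalently, passing to a shift-invariant measure of positive entropy supported on $F$ and applying the ergodic theorem) upgrades this to a single $z\in F$ whose one-set $P$ has positive lower density $\alpha$. This yields a digit-Cantor set $C=E_P\subseteq E$ with $\Haus C=\alpha>0$, together with all its right shifts $2^{-j}C=E_{P+j}\subseteq E$ and all its sub-Cantor sets $E_Q\subseteq E$ for $Q\subseteq P$.

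It remains to choose $x\in E$ with $C+xC\supseteq(0,b]$. The model case is $P$ equal to the set of even positions: there $x=\tfrac12$ places $C$ and $\tfrac12 C$ on disjoint (even and odd) digit slots, so $C+\tfrac12 C$ sweeps out every binary expansion with no carries, giving the full interval. In general I would pick $x$ whose binary one-set translates copies of $P$ so that these copies jointly cover a cofinite set of positions; on the covered block, choosing the two digit-Cantor factors independently realizes every digit pattern, hence a genuine interval $(0,b]$. Note that $x$ must be non-dyadic: the right shift is a subsequence operation, so $\tfrac12 E\subseteq E$ and any dyadic $x$ gives $E+xE\subseteq E$, whence the filling of gaps can only come from a non-dyadic $x\in E$. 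The main obstacle is precisely this final step. First, I must secure a one-set $P$ with small enough gaps that finitely many shifts cover a tail; if the element produced above has unbounded gaps, I would instead exploit a single long window in which the digit density exceeds $1/2$, which still yields an interval $(0,b]$ for a suitably small $b>0$, exactly as claimed. Second, and more seriously, I must control the binary carries generated by the multiplication $x\cdot C$ so that the interleaved sum contains an honest interval rather than merely a positive-measure Cantor set. Managing these carries while keeping $x$ itself inside the semi-group $E$ is the crux of the proof.
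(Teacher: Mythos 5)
Your proposal stops exactly where the theorem's difficulty begins, and you say so yourself: the final step of turning the digit-Cantor set $C=E_P$ into an interval inside $E+xE$ is left as "the crux of the proof," which means the argument is not a proof. Moreover, the strategy you outline for that step is unlikely to be repairable. Multiplication of a Cantor set $C$ by a non-dyadic real $x$ does not act position-wise on binary expansions: carries propagate without bound, so $xC$ is not a digit-Cantor set and there is no way to "place copies of $P$ on disjoint digit slots" as in your model case $x=\tfrac12$ (which, as you note, is dyadic and hence useless). In addition, positive \emph{lower density} of $P$ does not give bounded gaps, so even the preliminary claim that finitely many shifts of $P$ cover a cofinite set of positions fails in general; your fallback via a single dense window only gives density in a finite block, not an interval in the sumset. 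Your first two paragraphs (extracting a closed $\times 2$-invariant $F\subset E$ of large dimension and then a sub-Cantor set $E_P\subset E$ with $\Haus E_P>0$ via a variational-principle/ergodic argument) are plausible but also unnecessary for the intended conclusion.

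The paper's proof avoids digit arithmetic entirely. Since $\Haus E=1$, we have $\Haus(E\times E)=2$, and Marstrand's projection theorem gives some $e\in E$ for which $K_e=E+eE$ has positive Lebesgue measure --- no structural control over $e$ or over the product $eE$ is needed. Because $E$ is a semi-group under addition, $K_e+K_e\subset K_e$, so by the Steinhaus phenomenon $K_e$ contains an interval $[a,b]$. Finally, condition $(3)$ (really only the weaker $(3')$: $x\in F_i\Rightarrow x/2\in F_i$) shows $K_e$ is closed under halving and under the map $(k_1,k_2)\mapsto (k_1+k_2)/2$, and iterating these operations on $[a,b]$ and its dyadic rescalings fills out all of $(0,b]$. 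The two ideas you are missing are precisely the use of a projection theorem to get positive measure in \emph{some} direction $e$ without knowing anything about $e$, and the use of the semi-group plus halving structure to upgrade positive measure to the full interval $(0,b]$.
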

\begin{proof}
	In this proof use a projection trick. Here we take the addition and division operations in $\mathbb{R}$ rather than in $\mathbb{R}/\mathbb{Z}$. Consider $E$ as a subset of $[0,1]$ and $E\times E$ as a subset of $[0,1]^2$. We see that if $\Haus E=1$ then $\Haus E\times E=2$ and by the Marstrand projection theorem \cite[Theorem 3.1]{Fa2} we see that 
	$
	E+aE
	$
	has positive Lebesgue measure for all $a\in [0,1]\setminus X$ where $X\subset [0,1]$ has Hausdorff dimension $0$. In particular this means that there exists $e\in E$ such that $K_e=E+eE$ has positive Lebesgue measure.  Therefore $K_e+K_e$ contains intervals.  Due to the third property of ESG, when $x\in E$ we see that $x/2\in E$. Since $E$ is a semi-group we see that whenever $e,f\in E$ and $e,f<1/2$ it follows that $e+f\in E$. Now we see that whenever $k_1,k_2\in K_e$,
 	\[
 	\frac{k_1}{2}, \frac{k_2}{2},\frac{k_1+k_2}{2}\in K_e. 
 	\]
 	Indeed write, $k_1=e_1+ee_2, k_2=f_1+ef_2$ with $e_1,e_2,f_1,f_2\in E$. Then we see that for $i\in\{1,2\}$,
 	\[
 	\frac{e_i}{2},\frac{f_i}{2},\frac{e_i+f_i}{2}\in E.
 	\]
 	Suppose that $[a,b]\subset K_e$ with $0\leq a<b.$ Then from the discussions above we see that $[a/2,b/2]\subset K_e$. For any two intervals $I_1, I_2\subset K_e$ we see that
 	\[
 	[0.75a,0.75b]=\frac{I_1+I_2}{2}\subset K_e.
 	\]
 	We can repeat the above step. Let $N$ be an integer and $\{\epsilon_i\}_{i\in\{1,\dots,N\}}$ be any $0,1$ sequence. We see that
 	\[
 	\left[a/2+\sum_{i=1}^N \epsilon_i \frac{a}{2^{i+1}},b/2+\sum_{i=1}^N \epsilon_i \frac{b}{2^{i+1}}\right]\subset K_e.
 	\]
 	Let $N=\lceil\log \frac{2a}{b-a}\rceil$ and we see that
 	\[
 	[a/2,b]\subset K_e.
 	\]
 	The above holds for $I_1=[a/2^{k+1},b/2^{k+1}]$ and $I_2=[a/2^k,b/2^k]$ for all integers $k\geq 1$. Therefore we see that $(0,b]\subset K_e=E+eE$, which proves the result.
\end{proof}
One purpose for $(0,b)\subset E+xE$ is that  when we consider $E$ as a subset of $[0,1)$, we can often extend $E$ to $\mathbb{R}$ by considering $2^k.E=\{2^ka:a\in E\}$ and $\tilde{E}=\bigcup_{k\geq 0}2^k.E.$ Then as $(0,b)\subset E+xE$ we see that $$(0,\infty)=\tilde{E}+x\tilde{E}.$$
 
In fact we think a much stronger property should hold for ESG and this is a reason for formulating Conjecture \ref{CON1}. Notice that so far we have not used the full strength of the condition $(3)$ in Definition \ref{ER}. In fact Proposition \ref{PR01} and Theorem \ref{MUL} still hold if we replace the condition $(3)$ with the following weaker version
\[
(3'): \text{For all integer $i$, if $x\in F_i$ then $x/2\in F_i$.} 
\]  
We note that in the above statement, the $./2$ operation is taken in $[0,1)$ rather than in $\mathbb{R}/\mathbb{Z}$. The difference is that the $./2$ operation in $\mathbb{R}/\mathbb{Z}$ is not well defined and in general for $y\in\mathbb{R}/\mathbb{Z}$ the equation $2x=y$ has two solutions.
 
\section{Relation with Szemer\'{e}di's theorem} \label{RE}
 
 Recall that we identified $[0,1)$ with $\mathbb{R}/\mathbb{Z}$ and we write $+\mod 1$ to indicate that the summation is taken in $\mathbb{R}/\mathbb{Z}$. We want to show the following result about the \er set $E$, see Definition \ref{ER}.
 \begin{thm}\label{ThmER}
 	The \er set $E$ is an ESG in $\mathbb{R}/\mathbb{Z}$.
 \end{thm}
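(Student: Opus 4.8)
The plan is to realise the \er set as the union $E=\bigcup_{i\geq 1}F_i$, where
\[
F_i=\{x\in[0,1): A_x \text{ contains no } i\text{-term arithmetic progression}\},
\]
and then to verify the three defining properties of an ESG for this family. That $E=\bigcup_i F_i$ is immediate from Definition \ref{DEF1}: a point has no arbitrarily long progressions in $A_x$ exactly when some finite bound $i$ works. Since a shorter progression sits inside a longer one, no $i$-term progression implies no $(i+1)$-term progression, so the $F_i$ are nested, $F_i\subseteq F_{i+1}$; in particular for the arguments below I may freely enlarge indices (e.g. assume $i,j\geq 3$).

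For Property (1) I would argue symbolically rather than in $[0,1)$ directly, to sidestep the discontinuity of binary digits at dyadic rationals. Let $C_i\subset\{0,1\}^{\mathbb{N}}$ be the set of sequences whose $1$-positions carry no $i$-term progression. The condition ``$A_x$ contains a given $i$-term progression'' depends on finitely many coordinates, so it is open; hence $C_i$ is closed, and as a closed subset of the compact shift space it is compact. It is also invariant under the left shift $\sigma$, since a progression in $\sigma x$ lifts to one in $x$. Writing $\pi$ for the binary projection, I would then check that $F_i=\pi(C_i)$: the only reals with two expansions are the dyadic rationals, whose non-chosen expansion terminates in all $1$'s and so lies in no $C_i$, leaving only the chosen expansion relevant. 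Thus $F_i$ is compact, hence closed, and $\pi$ conjugates $\sigma$ to $x\mapsto 2x\bmod 1$, giving the required $\times 2\bmod 1$ invariance; a point near $1$ forces a long run of leading $1$'s, so $F_i\subset[0,1)$ as needed.

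Property (3) is the soft one. Reading the subsequence condition so that $A_y$ is, after translation by $N$, a subset of $A_x$ (that is $A_y\subseteq A_x+N$), any progression in $A_y$ translates to one of the same length inside $A_x$; since passing to a subset and translating can only destroy progressions, $A_y$ inherits the absence of $i$-term progressions, so $y\in F_i$.

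The crux is Property (2), and I expect the main obstacle to be carrying in binary addition, since the support of $z=x+y\bmod 1$ is not simply $A_x\cup A_y$. The key observation I would use is that a carry is local in the right sense: writing $z_k=x_k\oplus y_k\oplus c_{k+1}$ with $c_{k+1}$ the carry into position $k$, one has $c_{k+1}=1$ only if $x_{k+1}+y_{k+1}\geq 1$, i.e. only if $k+1\in A_x\cup A_y$ (if both digits vanish no carry can be produced or sustained). Hence
\[
A_z\subseteq (A_x\cup A_y)\cup\big((A_x\cup A_y)-1\big)=A_x\cup A_y\cup(A_x-1)\cup(A_y-1),
\]
and reducing mod $1$ only discards the integer bit, leaving the fractional digits untouched. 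Now $A_x,A_x-1$ carry no $i$-term progression and $A_y,A_y-1$ none of length $j$, so $A_z$ is a union of four such bounded-progression sets; applying van der Waerden's theorem (Theorem \ref{VAN}) iteratively to this union yields a bound $W(i,j)$ with no $W(i,j)$-term progression in $A_z$, that is $z\in F_{W(i,j)}$, which is exactly $F_i+F_j\subseteq F_{W(i,j)}$. The only remaining nuisance is the bookkeeping for dyadic sums whose naive expansion could end in $1$'s, but this is harmless because points of $F_i,F_j$ contain no long runs of $1$'s and hence no all-$1$ tails.
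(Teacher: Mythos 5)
Your proposal is correct and follows the same overall skeleton as the paper: decompose $E=\bigcup_i F_i$ with $F_i$ the set of points avoiding $i$-term progressions, check closedness and $\times 2 \bmod 1$ invariance, observe that condition (3) is immediate from translation-invariance of progressions, and invoke van der Waerden for condition (2). The genuine difference is in how you control the carries for condition (2). The paper splits $x+y$ as $s+r$ with $s=x\vee y$ and $r=x\wedge y$, then analyses the addition block-by-block on the runs of $1$'s of $s$, concluding that each digit $1$ of the sum is of one of three types before applying Theorem \ref{VAN}. You instead prove the single clean containment
\[
A_{x+y \bmod 1}\subseteq A_x\cup A_y\cup (A_x-1)\cup (A_y-1),
\]
via the observation that a carry into position $k$ forces $k+1\in A_x\cup A_y$ (majority of $(0,0,c)$ is $0$, so carries cannot be created or sustained across a position where both digits vanish). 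This is tighter and more transparent than the paper's block analysis: it reduces condition (2) to van der Waerden for a union of four explicitly translated copies of $A_x$ and $A_y$, with no case analysis. Your symbolic-space argument for closedness of $F_i$ (compactness of $C_i\subset\{0,1\}^{\mathbb N}$ pushed forward under the binary projection) is a mild repackaging of the paper's direct sequential argument; both handle the dyadic ambiguity the same way, by noting the non-terminating expansion of a dyadic rational lies in no $F_i$.

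One loose end: your closing remark addresses why $x$ and $y$ have no all-$1$ tails, but the relevant worry is whether the digit string produced by the carry algorithm for $z=x+y$ could be the non-canonical (all-$1$-tail) expansion of a dyadic rational, in which case $A_z$ as you compute it would not be the set used to define $F_{W(i,j)}$. This is self-repairing: your containment applies to whatever string the carry algorithm outputs, and a string whose $1$-set is contained in $A_x\cup A_y\cup(A_x-1)\cup(A_y-1)$ has progressions of bounded length, hence cannot have an all-$1$ tail; so the algorithm necessarily outputs the canonical expansion. A sentence to this effect would close the argument completely.
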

 The proof of Theorem \ref{ThmER} can be split into several lemmas. We need to check that $E$ satisfies the conditions $(1),(2),(3)$ in Definition \ref{ER}. The next lemma deals with the conditions $(1),(3)$.
 
 \begin{lma}\label{lm3}
 	There exists a sequence of closed $\times 2\mod 1$ invariant sets $F_i$ satisfying the statement of the condition $(3)$ in Definition \ref{ER} and the Erd\H{o}s set $E$ can be written as
 	\[
 	E=\bigcup_{i\in\mathbb{N}} F_i.
 	\]
 \end{lma}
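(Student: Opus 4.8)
The plan is to realise each $F_i$ as the set of points whose binary expansion contains no progression of $1$-positions of a fixed bounded length. Concretely, for each positive integer $k$ I would set
\[
F_k=\{x\in[0,1): A_x \text{ contains no } k\text{-term arithmetic progression}\},
\]
where $A_x$ is read off the canonical (for dyadic rationals, terminating) binary expansion. By definition $x$ is Erd\H{o}sian exactly when $A_x$ fails to contain arbitrarily long progressions, i.e. when some $k$ gives no $k$-term progression, so the identity $E=\bigcup_k F_k$ is immediate and the whole lemma reduces to checking that each $F_k$ is closed, is $\times 2\bmod 1$-invariant, and satisfies condition $(3)$.

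The invariance and condition $(3)$ I would settle purely combinatorially. Applying $T(x)=2x\bmod 1$ deletes the first binary digit and shifts the rest, so $A_{Tx}=(A_x\setminus\{1\})-1$, a translate of a subset of $A_x$; and by definition a subsequence $y$ of $x$ has $A_y$ equal to a translate of a subset of $A_x$ as well. Since translation sends $k$-term progressions to $k$-term progressions and passing to a subset can only destroy progressions, the property ``no $k$-term progression'' is inherited in both cases, giving $TF_k\subseteq F_k$ and condition $(3)$. The one point I would flag here is the digit at the shift boundary in the definition of a subsequence: under the intended reading, in which the leading digit is forced to $0$ (equivalently $x_0=0$) so that $A_y$ is genuinely a translate of a \emph{subset} of $A_x$, no new progression can be created and the argument applies verbatim.

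The step I expect to cost the most is closedness, since the digit map is discontinuous at dyadic rationals and each set $\{x:x_p=1\}$ is a union of half-open intervals rather than an open set. I would avoid this by lifting to the full shift. Let $\Sigma=\{0,1\}^{\mathbb{N}}$ carry the product topology, let $\sigma$ be the left shift, and let $\pi:\Sigma\to[0,1]$, $\pi(s)=\sum_i 2^{-i}s_i$, be the coding map, which is continuous and satisfies $\pi\circ\sigma=T\circ\pi$. Put $\tilde F_k=\{s\in\Sigma: A_s \text{ has no } k\text{-term progression}\}$. A $k$-term progression occupies a finite window, so the complement of $\tilde F_k$ is open and $\tilde F_k$ is a closed, $\sigma$-invariant, hence compact subset of $\Sigma$. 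Therefore $\pi(\tilde F_k)$ is compact, and as the all-ones sequence (the unique $\pi$-preimage of $1$) contains progressions of every length, $\pi(\tilde F_k)\subseteq[0,1)$ is a compact, and hence closed, subset of $[0,1)$.

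It remains to identify $\pi(\tilde F_k)$ with $F_k$, which simultaneously yields closedness of $F_k$ and reconciles the two expansions of dyadic rationals. For non-dyadic $x$ the expansion is unique, so $x\in\pi(\tilde F_k)$ iff $A_x$ has no $k$-term progression; for dyadic $x$ the non-canonical expansion ends in all ones and so never lies in $\tilde F_k$, whence membership in $\pi(\tilde F_k)$ is again decided by the canonical expansion. Thus $\pi(\tilde F_k)=F_k$ exactly, which shows $F_k$ is closed, and the invariance can even be reread cleanly as $TF_k=T\pi(\tilde F_k)=\pi(\sigma\tilde F_k)\subseteq\pi(\tilde F_k)=F_k$. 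Together with the combinatorial verification of condition $(3)$ and the evident decomposition $E=\bigcup_k F_k$, this completes the proof.
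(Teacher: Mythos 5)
Your decomposition is exactly the paper's: $F_k$ is the set of points whose (canonical) binary expansion contains no $k$-term arithmetic progression of $1$-positions, and the identity $E=\bigcup_k F_k$, the $\times 2\bmod 1$-invariance, and condition $(3)$ are all handled by the same translation-and-subset observation the paper treats as immediate. Where you genuinely diverge is the closedness of $F_k$, which is the only delicate point because of the two expansions of dyadic rationals. The paper argues directly in $[0,1)$: it takes a convergent sequence in $F_k$, assumes the limit has a $k$-term progression, and chooses that progression so that its last digit $1$ is followed by a $0$ precisely to dodge the $0.0111\dots=0.1$ ambiguity, then concludes that nearby points inherit the progression. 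You instead lift to the full shift $\{0,1\}^{\mathbb{N}}$, observe that the bad set is a union of finite-window cylinders so $\tilde F_k$ is compact and $\sigma$-invariant, push forward by the continuous coding map, and note that the non-terminating expansion of a dyadic rational ends in all ones and hence never lies in $\tilde F_k$, so $\pi(\tilde F_k)=F_k$ on the nose. Your route is cleaner and arguably more robust: it converts the dyadic-rational bookkeeping into the single observation that an all-ones tail contains arbitrarily long progressions, and it gives invariance for free from the semiconjugacy $\pi\circ\sigma=T\circ\pi$ (which, as you implicitly need, holds on $\tilde F_k$ since no point there has an all-ones tail). The paper's direct argument is more elementary but requires the reader to trust the claim that $|x_m-x|\le 2^{-n}$ forces agreement of the relevant digits, which is exactly the point your symbolic lift makes rigorous. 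Both proofs are correct and establish the same statement.
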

 \begin{proof}
 	We write $E$ as a countable union in the following way
 	\[
 	E=\bigcup_{i\geq 3} F_i,
 	\]
 	where $F_i,i\geq 1$ are subsets of $[0,1)$ defined as follows
 	
 	$x\in F_i$ $\iff$ the binary expansion of $x$ does not contain any $i$-term arithmetic progressions of positions of digit $1$.
 	We note here that for $i\in\{1,2\}$ the sets $F_i$ are empty. Now we show that $F_i,i\geq 3$ are closed. For any integer $i\geq 3$, we consider a sequence $\{x_k\}_{k\in\mathbb{N}}\subset F_i$. Suppose that
 	\[
 	x_k\to x\in [0,1).
 	\]
 	Assume that $x\notin F_i$. Then $x$ has a $i$-term arithmetic progression of position of digit $1$ in its binary expansion. Recall that when $x$ is a dyadic rational number we have to write the binary expansion with finitely many $1's$. So in this case we can find a $i$-term progression of digits of $1's$ such that the last digit $1$ is followed by a digit $0$. This is to avoid the approximation of the following form, for example
 	\[
 	0.011111\dots=0.1.
 	\]
 	Suppose the last term of such progression is the $n$-th digit. Then if $k$ is large enough we see that $|x_k-x|\leq 2^{-n}$ and $x_k$ would also contain a $i$-term arithmetic progression. This is not possible therefore we see that $x\in F_i$ as well. Then we see that $F_i$'s are closed sets. It is clear that they are $\times 2\mod 1$ invariant and satisfy the condition $(3)$ of Definition \ref{ER}.
 
 \end{proof}
 
 Now it is left to show that the condition $(2)$ is satisfied by $E$ as well. In the following lemma, we use the decomposition $E=\bigcup_{i\in\mathbb{N}} F_i$ which appeared in the above proof.
 
 \begin{lma}\label{lm2}
 	Given any two integers $i,j\geq 3$, let $x,y$ be two sequences (real numbers in $[0,1)$), such that $x\in F_i$ and $y\in F_j$. Then there exist an integer $W(i,j)$ such that
 	\[
 	x+y\mod 1\in F_{W(i,j)}.
 	\]
 	In particular, the condition $(2)$ is satisfied by $E$.
 \end{lma}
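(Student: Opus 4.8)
The plan is to analyze the binary addition of $x$ and $y$ digit by digit, tracking carries, and to show that the positions of digit $1$ in $z=x+y\bmod 1$ lie in a union of three sets, each of which avoids long arithmetic progressions; van der Waerden's theorem (Theorem \ref{VAN}) then supplies the bound $W(i,j)$. First I would set up the carry recursion: writing $x_i,y_i$ for the binary digits and letting $c_i$ denote the carry out of position $i$, so that $x_i+y_i+c_{i+1}=z_i+2c_i$, I classify each position as \emph{generate} ($x_i=y_i=1$), \emph{propagate} ($x_i+y_i=1$), or \emph{kill} ($x_i=y_i=0$). A direct case check shows that $z_i=1$ forces either $i\in A_x\cup A_y$ (the generate and propagate cases) or that $i$ is a kill position with an incoming carry $c_{i+1}=1$. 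Hence, setting $C=\{i: x_i=y_i=0,\ c_{i+1}=1\}$, we obtain the containment
\[
A_z\subseteq A_x\cup A_y\cup C.
\]

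The crux is then to control $C$. The key observation is that if $c_{i+1}=1$ then position $i+1$ cannot be a kill, since a kill at $i+1$ would force $c_{i+1}=0$; therefore $x_{i+1}+y_{i+1}\geq 1$, i.e. $i+1\in A_x\cup A_y$. This yields the translation bound
\[
C\subseteq (A_x\cup A_y)-1.
\]
Because translating a set preserves the length of its longest arithmetic progression, and because $A_x,A_y$ avoid $i$- and $j$-term progressions respectively, two applications of Theorem \ref{VAN} complete the argument. Taking the third set empty (which contains no $3$-term progression), Theorem \ref{VAN} gives that $A_x\cup A_y$, and hence its translate containing $C$, has no $K$-term progression with $K=W(i,j,3)$. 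Applying Theorem \ref{VAN} again to the triple $A_x$, $A_y$, $C$ shows that $A_x\cup A_y\cup C$, and therefore $A_z$, contains no $W(i,j,K)$-term progression. Setting $W(i,j)=W\bigl(i,j,W(i,j,3)\bigr)$ then gives $x+y\bmod 1\in F_{W(i,j)}$, which is exactly condition $(2)$.

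The main obstacle is the careful bookkeeping of carries for infinite binary expansions together with the dyadic-rational convention adopted earlier. In particular, one must rule out the possibility that the expansion of $z$ terminates in an infinite string of $1$'s; but such a tail would itself be an infinite arithmetic progression of $1$-positions, which is incompatible with the progression-free conclusion just derived. Thus the carry recursion automatically produces the admissible finite-tail representative used to define the $F_i$, and no separate treatment of dyadic points is needed beyond this remark. Verifying that the carry recursion is well defined on the relevant sequences, and that the case analysis above is exhaustive, is the only genuinely delicate part; the combinatorial heart of the proof is the pair of containments $A_z\subseteq A_x\cup A_y\cup C$ and $C\subseteq (A_x\cup A_y)-1$, after which van der Waerden's theorem does the rest.
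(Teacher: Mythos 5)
Your proposal is correct and follows essentially the same route as the paper: both arguments show that the $1$-positions of $x+y \bmod 1$ lie in $A_x\cup A_y$ together with a set of carry-generated positions contained in a unit shift of $A_x\cup A_y$, and then invoke van der Waerden's theorem to bound the longest progression in the union. Your carry-recursion bookkeeping (with the explicit containments $A_z\subseteq A_x\cup A_y\cup C$ and $C\subseteq (A_x\cup A_y)-1$) is a cleaner and arguably more careful rendering of the paper's two-step block-addition argument, and in particular it justifies the final bound $W\bigl(i,j,W(i,j,3)\bigr)$ more transparently than the paper's stated choice $W(i,j,i)$.
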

 \begin{proof}
 	We can perform the sum $x+y\mod 1$ in $2$ steps. We first insert $1$'s of $x$ in the $0$'s of $y$. Precisely, we define the following two sequences $s,r$ of $0,1$:
 	\begin{eqnarray*}
 		&s_i=1 & \text{ if precisely one of $x_i,y_i$ is equal to $1$,}\\
 		&s_i=x_i& \text{ otherwise.}\\
 		&r_i=0& \text{ if $x_i=0, y_i=1$,}\\
 		&r_i=y_i& \text{ otherwise.}	
 	\end{eqnarray*}
 	It is clear that $x+y=s+r \mod 1$.
 	
 	After the above step, we see that $s$ is a $0,1$ sequence. Then $s$ is a concatenation of blocks of $0's$ and $1's$. Between two successive blocks of $1's$, there is at least one $0$ digit. When $s_i=0$ we see that $r_i=0$ as well. The next step is to perform $s+r$ for individual blocks of $1's$ of $s$. It is easy to see that for each individual block, the sum of $s,r$ will not disturb other blocks of $1's$ because the sum will change at most one digit $0$ next to the leftmost of a block of $1's$. For example
 	\[
 	\dots0111110\dots+ \dots0001010\dots=\dots1001000\dots
 	\]
 	
 	Let us now examine arithmetic progressions in each step. After step $1$, both $r,s$ do not contain arbitrarily long arithmetic progressions. Because the digits $1$ of $s$ come from either the digits $1$ of $x$ or $y$. The digits $1$ of $r$ are also digits $1$ of $y$. 
 	For the second step, if $x+y \mod 1$ contains arbitrarily long progressions, then there are arbitrarily long progressions on digits of $1's$ of one of the following types:
 	
 	1. the digit $1$ is an unchanged $1$ from $s$.
 	
 	2. the digit $1$ is a digit $1$ of the same position in $r$.
 	
 	3. the digit $1$ is a new $1$ resulting from step $2$ by performing the sum. This digit is next to the leftmost position of blocks of $1's$ of $s$.
 	
 	By van der Waerden's theorem, there is an integer $W(i,j)$ such that $x+y\mod 1$, when viewed as a $0,1$ sequence, does not contain any $W(i,j)$-terms arithmetic progressions of digit $1$. In fact, we can choose $W(i,j)$ to be $W(i,j,i)$ in as stated in Theorem \ref{VAN}. 
 \end{proof}
 From the above two lemmas we see that Theorem \ref{ThmER} concludes. Now we want to relate \sz's theorem with the Hausdorff dimension of $E$. 
 \begin{thm}\label{Thmsum}
 	$\text{\sz's theorem }\iff\Haus E=0\iff E+EE$  does not contain intervals.
 \end{thm}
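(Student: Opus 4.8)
The plan is to prove the three statements pairwise equivalent through the cycle
\[
\Haus E=0 \implies \text{\sz's theorem} \implies E+EE\text{ contains no interval} \implies \Haus E=0,
\]
using the packing dimension for the middle arrow, Proposition \ref{PR01} with Theorem \ref{MUL} for the last, and a box‑counting argument for the first. I expect the arrow from $\Haus E=0$ back to \sz's theorem to be the real obstacle.

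\textbf{From \sz\ to no interval.} Assuming \sz's theorem I would first prove the stronger statement $\dim_P E=0$. The theorem is equivalent to the finite statement that for every $i$ the maximal size $r_i(N)$ of an $i$-term-progression-free subset of $[1,N]$ satisfies $r_i(N)=o(N)$. The admissible length-$N$ prefixes of $F_i$ are exactly the $0,1$-strings whose digit-$1$ support avoids $i$-term progressions, and there are at most $\sum_{j\le r_i(N)}\binom{N}{j}=2^{o(N)}$ of them, so $\uboxd F_i=0$; by countable stability of packing dimension $\dim_P E=\sup_i\dim_P F_i=0$. Then the product estimate $\Haus(A\times B)\le\Haus A+\dim_P B$ recalled above, applied twice together with the subadditivity of packing dimension under products, combined with the fact that $(a,b,c)\mapsto c+ab$ is Lipschitz on $[0,1]^3$, gives $\Haus(E+EE)\le\Haus E+\dim_P E+\dim_P E=0$; in particular $E+EE$ contains no interval (and $\Haus E=0$ as a byproduct).

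\textbf{From no interval to $\Haus E=0$.} I would argue by contraposition. If $\Haus E\neq 0$ then $\Haus E=1$ by Proposition \ref{PR01}, and Theorem \ref{MUL} supplies some $e\in E$ and $b>0$ with $(0,b]\subset E+eE$. Since $eE\subset EE$ we have $E+eE\subset E+EE$, so $E+EE$ contains the interval $(0,b]$. Hence the absence of an interval forces $\Haus E=0$.

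\textbf{From $\Haus E=0$ to \sz.} Again by contraposition. Failure of \sz's theorem means, by subadditivity of $r_k$ and Fekete's lemma, that for some $k\ge 3$ the limit $c_k=\lim_N r_k(N)/N$ is positive, and being an infimum it satisfies $r_k(N)\ge c_kN$ for all $N$. Every subset of a fixed maximal $k$-progression-free set is again $k$-progression-free, so $F_k$ has at least $2^{r_k(N)}\ge 2^{c_kN}$ admissible length-$N$ prefixes, whence $\uboxd F_k\ge c_k>0$. By Lemma \ref{lm3} the set $F_k$ is closed and $\times 2\bmod 1$-invariant, and for such sets Hausdorff and upper box dimension coincide (a theorem of Furstenberg), so $\Haus E\ge\Haus F_k=\uboxd F_k\ge c_k>0$, contradicting $\Haus E=0$. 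The delicate point, and the crux of the whole theorem, is precisely this upgrade from box to Hausdorff dimension: the failure of \sz's theorem only yields $k$-progression-free sets of positive \emph{upper} density, which governs box and packing dimension but not Hausdorff dimension. Indeed Lemma \ref{lmaDim} computes Hausdorff dimension from the \emph{lower} density, and a naive concatenation of the finite extremal sets either creates arbitrarily long cross-block progressions or drives the lower density to $0$; it is exactly the $\times 2\bmod 1$-invariance of $F_k$ that rescues the argument by forcing its Hausdorff and box dimensions to agree.
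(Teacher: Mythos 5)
Your proposal is correct, and two of the three arrows coincide with the paper's argument: your counting bound $\sum_{j\le r_i(N)}\binom{N}{j}=2^{o(N)}$ is the same estimate the paper extracts from the Chernoff--Hoeffding inequality (and your direct use of the Lipschitz map $(a,b,c)\mapsto c+ab$ on $[0,1]^3$ is cleaner than the paper's $\log$--$\exp$ decomposition of the product set), while the arrow from ``no interval'' to $\Haus E=0$ is exactly the paper's appeal to Proposition \ref{PR01} and Theorem \ref{MUL}. The genuine divergence is in the implication $\Haus E=0\implies$ \sz's theorem. The paper argues via densities: a counterexample of positive upper Banach density is upgraded, through the reduction lemmas of Section \ref{seREDU}, to a progression-free set of positive \emph{lower} natural density, and Lemma \ref{lmaDim} then produces a subset of $E$ of positive Hausdorff dimension. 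You instead use Fekete's lemma to note that $r_k(N)/N$ converges to its infimum $c_k$, so failure of \sz's theorem gives the uniform bound $r_k(N)\ge c_k N$ for all $N$, whence $\uboxd F_k\ge c_k$ by counting admissible prefixes, and you then invoke Furstenberg's theorem that Hausdorff and box dimensions coincide for closed $\times 2\bmod 1$-invariant sets. Both routes are valid; yours trades the paper's elementary (if fiddly) gluing constructions and Lemma \ref{lmaDim} for one clean but external dynamical input --- Furstenberg's dimension formula, which the paper never cites --- and your diagnosis that the upper-to-lower density upgrade is the crux of this direction matches exactly the role played by the paper's appendix.
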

 \begin{proof}
 	We have the decomposition $E=\bigcup_{i\geq 3} F_i$ which was mentioned before. Now each $E_i$ is closed and has lower box dimension $0$. To see the reason, for a large integer $N$, there are not so many $0,1$ sequences of length $N$ which do not contain $i$-term arithmetic progressions. In fact let $\epsilon>0$ be a small positive number. Let $N$ be a large integer and we can count the number of $0,1$ sequences of length $N$ with less than $\epsilon N$ many digit $1$. By Chernoff-Hoeffding inequality \cite{CH}, this number is 
 	\[
 	O(\exp(-D(\epsilon) N)2^N)
 	\]
 	where
 	\[
 	D(\epsilon)=\epsilon \log 2\epsilon+(1-\epsilon)\log 2(1-\epsilon)=\log 2+\epsilon\log \epsilon+(1-\epsilon)\log (1-\epsilon).
 	\]
 	Therefore we may express the bound as
 	\[
 	O(0.5^{\epsilon\log \epsilon+(1-\epsilon)\log (1-\epsilon)}).
 	\]
 	Then by \sz's theorem, we see that for large $N$, sequences over $\{0,1\}$ without $i$-term arithmetic progressions of positions of digit $1$ contain at most $o(\epsilon N)$ many digits $1$. Therefore there are at most $O(0.5^{N(\epsilon\log \epsilon+(1-\epsilon)\log (1-\epsilon)}))$ many of them. This implies that in order to cover $F_i$ with dyadic intervals of length $2^{-N}$, it is enough to use at most 
 	$O(0.5^{N(\epsilon\log \epsilon+(1-\epsilon)\log (1-\epsilon)}))$ many of them. Therefore we see that the upper box as well as the Hausdorff dimension of $E_i$ is at most
 	\[
 	\epsilon\log \epsilon+(1-\epsilon)\log (1-\epsilon).
 	\]
 	Since we can choose $\epsilon$ arbitrarily close to $0$ we see that $\Haus F_i=0$, then $\Haus E=0$ because the Hausdorff dimension is countably stable. Notice that we have in fact showed that the packing dimension of $E$ is $0$, namely, $\dim_P E=0$. Then it is easy to see that $EE$ has zero Hausdorff dimension and therefore $E+EE$ also has zero Hausdorff dimension. In particular $E+EE$ does not contain any interval.

 On the other hand, it is not hard to see that
 \[
 \Haus E=0\implies \text{ \sz's theorem}.
 \]
  Indeed, if \sz's theorem would be not true, then we can find a sequence of integers of positive upper Banach density without containing arbitrarily long arithmetic progressions. Then by a standard reduction argument, which is explained in Section \ref{seREDU}, we can assume this sequence has positive natural lower density. Then Lemma \ref{lmaDim} gives us the conclusion that $\Haus E>0$. Then by Proposition \ref{PR01} we see that
 \[
\Haus E=0\iff \Haus E\neq 1\iff \text{ \sz's theorem}.
 \]
 To conclude this theorem we need to show that
 \[
 \Haus E=1\implies E+EE \text{ contains intervals}.
 \]
  This follows easily form Theorem \ref{MUL} because $E$ is an ESG.
  \end{proof}
The key point of the above theorem is that if one can show that $E+EE$ does not contain any interval then another proof of \sz's theorem can be found. The intuition behind is that the set $E$ is constructed with numbers whose binary expansions are rather restrictive. We already know that $E+E$ cannot have positive measure because $E$ is an ESG and a Lebesgue typical number in $(0,1)$ is base $2$ normal and in particular the binary expansion has arbitrarily long consecutive digit $1$. At this stage we can say nothing about the product set $EE$. For example if $EE\subset E$ then \sz's theorem follows. However we do not know whether such a result holds and we haven't find any concrete counter examples yet. This is a reason for asking Question \ref{Q1}.

Now we shall extend the \er set to $[0,\infty)$.
\begin{defn}[Extended \er set]
	We say $x\in [0,\infty)$ is extended Erd\H{o}sian, if the binary expansion of $x$ does not contain arbitrarily long arithmetic progressions of positions of digit $1$. The collection of all Erd\H{o}sian numbers is a subset of $[0,\infty)$ and we call it the extended Erd\H{o}s set $\tilde{E}$.
\end{defn}
It is easy to check that (see the discussion below the proof of Theorem \ref{MUL})
\[
\tilde{E}=\bigcup_{k\geq 0} 2^k.E.
\]
So we see that if $\Haus E=1$ then there exists $e\in E$ such that $\tilde{E}+x\tilde{E}=[0,\infty)$. Or equivalently, we have the following result.
\begin{thm}
	\sz's theorem $\iff$ $\tilde{E}$ is not a basis of order $2$ for $[0,\infty)$.
\end{thm}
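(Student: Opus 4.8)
The plan is to reduce the statement entirely to results already proved for $E$, since $\tilde E$ is built from $E$ by dyadic scaling. First I would record the two equivalences that frame everything: by Theorem \ref{Thmsum} we have \sz's theorem $\iff \Haus E = 0$, and by Proposition \ref{PR01} the quantity $\Haus E$ takes only the values $0$ and $1$. Hence the asserted equivalence is logically identical to
\[
\Haus E = 1 \iff \tilde E \text{ is a basis of order } 2 \text{ for } [0,\infty),
\]
and it suffices to establish the two implications of this reformulated statement.

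For the implication $\Haus E = 1 \Rightarrow \tilde E$ is a basis, I would invoke Theorem \ref{MUL} to produce $b>0$ and $e\in E$ with $(0,b]\subset E+eE$. The remaining work is the scaling argument indicated after Theorem \ref{MUL}: given any $y>0$, choose $k\geq 0$ with $y/2^k\in(0,b]$, write $y/2^k=a+eb'$ with $a,b'\in E$, and multiply through by $2^k$ to obtain $y = 2^k.a + e\,(2^k.b')$ with $2^k.a,\ 2^k.b'\in\tilde E$. Together with $0 = 0 + e\cdot 0 \in \tilde E + e\tilde E$ (as $0\in E\subset\tilde E$) this yields $[0,\infty)\subset \tilde E + e\tilde E$, so $\tilde E$ is a basis of order $2$ for $[0,\infty)$ with witness $e\in E\subset\tilde E$.

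For the converse I would argue contrapositively, showing $\Haus E=0 \Rightarrow \tilde E$ is not a basis. The key observation, already extracted inside the proof of Theorem \ref{Thmsum}, is that $\Haus E = 0$ in fact forces $\dim_P E = 0$; since dyadic scaling preserves dimension and both $\Haus$ and $\dim_P$ are countably stable, it follows that $\dim_P\tilde E = \Haus\tilde E = 0$. Then for an arbitrary $x$ I would bound $\Haus(\tilde E + x\tilde E)$ by writing $\tilde E + x\tilde E = \bigcup_{k,l\geq 0}(2^k.E + x\,(2^l.E))$; each summand is a Lipschitz image of $2^k.E \times x\,(2^l.E)$, so by the inequality $\Haus(A\times A')\leq \Haus A + \dim_P A'$ recalled in the preliminaries it has Hausdorff dimension $0$, and countable stability gives $\Haus(\tilde E + x\tilde E)=0$ for every $x$. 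A dimension-zero set cannot contain any interval, let alone $[0,\infty)$, so no choice of $x$ makes $\tilde E$ a basis of order $2$ for $[0,\infty)$.

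The only genuinely delicate point is the bookkeeping forced by unboundedness: $\tilde E$ is not bounded, so I cannot apply the product--dimension inequality to $\tilde E$ directly and must pass through the countable decomposition into the bounded pieces $2^k.E$, taking care that the degenerate case $x=0$ (where $x\tilde E=\{0\}$ and $\tilde E + x\tilde E = \tilde E$) is subsumed. Everything else is a direct assembly of Theorems \ref{Thmsum} and \ref{MUL} together with Proposition \ref{PR01}.
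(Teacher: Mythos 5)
Your proposal is correct and follows essentially the route the paper intends: the paper derives this theorem directly from the discussion after Theorem \ref{MUL} (the identity $\tilde{E}=\bigcup_{k\geq 0}2^k.E$ plus dyadic rescaling of $(0,b]\subset E+eE$), combined with Theorem \ref{Thmsum} and Proposition \ref{PR01}. The only difference is that you explicitly write out the converse direction (Szemer\'edi $\Rightarrow$ $\tilde{E}$ is not a basis) via the packing-dimension-zero argument, which the paper leaves implicit in its ``or equivalently''; that filling-in is accurate and consistent with the argument already given in the proof of Theorem \ref{Thmsum}.
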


Now we want to show that Conjecture \ref{CON1} implies \sz's theorem.
\begin{thm}
	\[
	\text{Conjecture \ref{CON1}}\implies \text{ \sz's theorem}.
	\]   
\end{thm}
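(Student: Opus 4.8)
The plan is to synthesize the structural results already established, applying Conjecture \ref{CON1} to the \er set $E$ itself. By Theorem \ref{ThmER} the \er set $E$ is an ESG, so the conjecture is directly applicable to it. Assuming the conjecture, I would show that $\Haus E = 0$, which by Theorem \ref{Thmsum} is equivalent to \sz's theorem.

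First I would reduce matters to ruling out full dimension. By Proposition \ref{PR01}, the ESG $E$ satisfies $\Haus E \in \{0,1\}$, so it suffices to show $\Haus E \neq 1$. I would argue by contradiction: suppose $\Haus E = 1$. Since $E$ is then an ESG of full Hausdorff dimension, Conjecture \ref{CON1} forces $E = \mathbb{R}/\mathbb{Z}$.

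It then remains to contradict this by exhibiting a single non-Erd\H{o}sian number, i.e. by showing $E \subsetneq \mathbb{R}/\mathbb{Z}$. Under the identification of $[0,1)$ with $\{0,1\}^{\mathbb{N}}$, I would take the number $x$ whose positions of digit $1$ form the set $\bigcup_{k\geq 1}\{2^k, 2^k+1, \dots, 2^k+k\}$; these blocks are pairwise disjoint since $2^k+k < 2^{k+1}$, and the $k$-th block is a run of $k+1$ consecutive positions, hence an arithmetic progression of length $k+1$ with common difference $1$. Thus the positions of digit $1$ of $x$ contain arbitrarily long arithmetic progressions, so $x \notin E$ by Definition \ref{DEF1}. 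This contradicts $E = \mathbb{R}/\mathbb{Z}$, so the assumption $\Haus E = 1$ must fail, whence $\Haus E = 0$.

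Combining these steps with the equivalence between $\Haus E = 0$ and \sz's theorem established in Theorem \ref{Thmsum} completes the proof. There is no substantial obstacle: the entire argument is a direct assembly of Theorem \ref{ThmER}, Proposition \ref{PR01}, and Theorem \ref{Thmsum}, and the only verification requiring any thought, namely $E \neq \mathbb{R}/\mathbb{Z}$, is settled by the explicit witness above. Indeed, this is precisely the point of Conjecture \ref{CON1}: it is framed so that the \er set, which is manifestly a proper subset of $\mathbb{R}/\mathbb{Z}$, becomes the instance forcing $\Haus E = 0$.
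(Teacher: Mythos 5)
Your proposal is correct and follows essentially the same route as the paper: assume the conjecture, use Proposition \ref{PR01} to reduce to excluding $\Haus E=1$, note that the conjecture would then force $E=\mathbb{R}/\mathbb{Z}$, and derive a contradiction from $E\subsetneq\mathbb{R}/\mathbb{Z}$ before invoking the equivalence in Theorem \ref{Thmsum}. The only difference is that you supply an explicit non-Erd\H{o}sian witness where the paper simply asserts ``this is impossible,'' which is a welcome but inessential elaboration.
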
 
\begin{proof}
Assume now Conjecture \ref{CON1}. We see that if $\Haus E=1$ then $E$ must be the trivial ESG, namely $\mathbb{R}/\mathbb{Z}$ or $[0,1)$ itself. This is impossible therefore this theorem concludes.
\end{proof}

\section{Appendix: Simple reductions of  \sz's theorem}\label{seREDU}
In this section we reduce \sz's theorem for upper Banach density to lower natural density and in a fractal geometry point of view our approach is as follows
\[
\text{Assouad dimension}\implies \text{ box dimension } \implies\text{ Hausdorff dimension }.
\]
We consider the upper Banach density version of \sz's theorem, which says that any sequence with positive upper Banach density contains arbitrarily long arithmetic progressions.
\begin{lma}\label{lm1}
	If any sequence with positive upper natural density contains arbitrarily long arithmetic progressions, then the same holds if we only require the sequence to have positive upper Banach density.
\end{lma}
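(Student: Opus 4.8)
The plan is to pass through the finitary form of \sz's theorem. First I would record what positive upper Banach density buys us: if $d_B(A)=\delta>0$, then by definition there are windows $[M_n+1,M_n+k_n]$ with $k_n\to\infty$ and $(A(M_n+k_n)-A(M_n))/k_n\ge\delta/2$ for all large $n$. Translating each window to the origin produces finite sets $B_n\subset[1,k_n]$ with $|B_n|\ge(\delta/2)k_n$, and any $\ell$-term arithmetic progression found inside some $B_n$ lifts to an $\ell$-term progression inside $A$. Hence it suffices to prove the finitary statement: for every $\ell$ there is an $N_0=N_0(\delta/2,\ell)$ so that every subset of $[1,N_0]$ of size at least $(\delta/2)N_0$ contains an $\ell$-term progression. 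Applying this with any window for which $k_n\ge N_0$ then finishes the lemma, since $\ell$ is arbitrary.

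The second step is to deduce this finitary statement from the hypothesis (the upper-natural-density form of \sz's theorem), which I would do by contraposition. Suppose the finitary statement failed for some $\ell$ and some density $\delta'=\delta/2$: then for every $N$ there is an $\ell$-progression-free set $S_N\subset[1,N]$ with $|S_N|\ge\delta' N$. I would assemble a single infinite set $A=\bigcup_k (S_{N_k}+T_k)$ by placing rigidly translated copies of the $S_{N_k}$ into far-apart blocks $[T_k+1,T_k+N_k]$. The offsets are chosen with two competing requirements: the gap $G_k=T_k-(T_{k-1}+N_{k-1})$ before block $k$ should grow fast enough to dwarf everything placed earlier, while still $T_k=o(N_k)$, so that evaluating at $m=T_k+N_k$ gives $A(m)/m\ge\delta' N_k/(T_k+N_k)\to\delta'$, whence $\ud(A)\ge\delta'>0$. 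By construction no individual block contains an $\ell$-term progression, so the only way $A$ could contain one is through a cross-block progression; excluding these will show $A$ is $\ell$-progression-free, contradicting the hypothesis and establishing the finitary statement.

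The heart of the argument, and the step I expect to be the main obstacle, is ruling out cross-block $\ell$-term progressions. A short analysis using $G_k\gg(T_{k-1}+N_{k-1})$ shows that any putative progression meeting more than one block must in fact have all but its first term lying in a single block $k$ (a term two steps below block $k$ would be forced to be negative), so it consists of an $(\ell-1)$-term progression inside the copy of $S_{N_k}$ together with one extra term, its backward extension $a=x-d$, lying in an earlier block. Since $S_{N_k}$ is only guaranteed to be $\ell$-progression-free, this pattern is not automatically excluded, so I would use the remaining freedom in the offset $T_k$: there are only finitely many $(\ell-1)$-term progressions inside $S_{N_k}$ with common difference exceeding $G_k$, hence finitely many forbidden backward-extension points, and as $T_k$ ranges over the (long, since $N_k$ is large) admissible interval one can select a value for which no such extension lands in the already-fixed earlier part of $A$. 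With this choice $A$ is genuinely $\ell$-progression-free, completing the contradiction and hence the lemma.
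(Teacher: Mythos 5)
Your route is genuinely different from the paper's: the paper takes the given set $A$ of positive upper Banach density, transplants its dense windows into far-separated blocks to build a set $B$ of positive upper \emph{natural} density, invokes the hypothesis to get long progressions in $B$, and then shows (via the geometric growth of the block lengths) that any long progression in $B$ must put many terms into a single block, hence pulls back to a long progression in $A$. You instead pass through the finitary form of \sz's theorem and prove that form by contraposition, pasting together finitary counterexamples. Your first step (windows of $A$ give finite sets of density $\ge \delta/2$, so the finitary statement suffices) is fine, and the contrapositive pasting is a standard and legitimate alternative. The problem is in the last step.

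The gap is the exclusion of cross-block progressions. Having insisted on $T_k=o(N_k)$, you are forced to have gaps $G_k<N_k$, and you propose to kill the surviving configurations --- an $(\ell-1)$-term progression inside block $k$ with common difference $d>G_k$ whose backward extension $x-d$ lands on an element of the already-built part $A_{<k}$ --- by varying $T_k$ over the admissible range and avoiding finitely many forbidden offsets. But count them: each forbidden offset has the form $T_k=a-(s-d)$ with $a\in A_{<k}$ and $(s,d)$ ranging over the $(\ell-1)$-term progressions of $S_{N_k}$. Already for $\ell=3$ the ``$(\ell-1)$-term progressions'' are just pairs of elements of $S_{N_k}$, of which there are on the order of $(\delta')^2N_k^2$, so the forbidden set has size up to $c\,N_k^2\,|A_{<k}|$, while the admissible range for $T_k$ has length $O(N_k)$ (and only $o(N_k)$ under your constraint). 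The pigeonhole fails by a factor of $N_k$, and I see no way to rescue it: restricting to $d>G_k$ only helps once $G_k\gtrsim N_k/(\ell-2)$, which contradicts $T_k=o(N_k)$. The fix is to abandon $T_k=o(N_k)$: the hypothesis only needs \emph{positive} upper natural density, not density close to $\delta'$. Take $N_k$ enormous compared to $T_{k-1}+N_{k-1}$ and set $T_k=T_{k-1}+N_{k-1}+N_k$, so the gap before block $k$ exceeds both $N_k$ and everything previously placed. Then a progression with two terms in block $k$ has difference $<N_k$ and its backward extension falls in the empty gap, while a progression with only one term in block $k$ has difference larger than $T_{k-1}+N_{k-1}$ and so cannot have two further terms below; hence every $\ell$-term progression lies in a single block and cannot exist. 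The upper density is still at least $\delta'/3$, which completes the contradiction with no choice of offsets required.
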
 
\begin{proof}
	Let $0<\rho<1, M>1$ be real numbers which can be chosen arbitrarily. Let $A\subset\mathbb{N}$ be such that the upper Banach density is $\alpha>0$. From the definition of upper Banach density we see that there is a sequence of integers $N_i\to\infty$ such that for any $i$, there exist an integer $k_i$ such that
	\[
	\#|A\cap [k_i,k_i+N_i-1]|\geq \rho\alpha N_i.
	\]
	By taking a further subsequence if necessary we can assume that $N_{i+1}>M N_i$ for all $i$. Define a sequence $w_i$ by
	\[
	w_i=\sum_{s=1}^{i} N_s.
	\]	
	Now we want to manipulate the original sequence $A$. Let $A_i=A\cap [k_i,k_i+N_i-1]$. Define a new sequence $B\subset\mathbb{N}$ by the following
	\[
	B=\bigcup_{i} (A_i-k_i+w_i).
	\]
	This sequence $B$ is constructed by shifting each $A_i$ into the interval $[w_i,w_i+N_i-1]$. The upper natural density of $B$ is bounded from below by
	\[
	\ud (B)\geq \limsup_i \frac{\# |A_i|}{\sum_{s=1}^{i} N_i}\geq \limsup_i \rho\frac{\alpha N_i}{N_i\sum_{s=1}^i M^{-s+1}}\geq  \frac{\rho M}{M-1}\alpha>0.
	\]	
	By assumption, $B$ contains arbitrarily long arithmetic progressions. We want to show that the progressions mainly lie in intervals $[w_i,w_i+N_i-1]$. 	
	Suppose there is a $k$-term progression $P$ inside $B$, where $k\geq 4$ is an integer. Then suppose the first term of $P$ is in $A_i-k_i+w_i$ for some integer $i$. 	
	Suppose the second term is also in this interval and the progressions spans over $Q$ intervals and terminate at the $(Q+1)$-th interval. Then we see that because the length of the intervals grows at least exponentially and the gap of progression is smaller than $N_i$
	\[
	M+M^2+\dots+M^Q\leq k.
	\]	
	This implies that:
	\[
	Q\leq \frac{\log (k+1)}{\log M}.
	\]
	Then in at least one of the intervals in the progression has no less than
	\[
	k/Q\geq \frac{k}{\log (k+1)}\log M
	\]
	terms. If this holds for arbitrarily large $k$ then the original sequence also contains arbitrarily long progressions.
	Similarly if the second term is in another interval say $A_{i+m}-k_{i+m}+\omega_{i+m}$, then the gap is bounded by $N_{i+m}(1+M^{-1}+\dots+M^{-m})\leq N_{i+m}\frac{M}{M-1}.$ We can argue in the same way as before. Suppose the sequence spans $Q$ intervals and stop at the $Q+1$-th interval (with a loss of factor $\frac{M-1}{M}$ but this is ignorable if $M$ is large) then we see that
	\[
	\frac{M-1}{M}(M+M^2+\dots+M^Q)\leq k.
	\]
	Then again we see that at least in one interval the progression has no less than
	\[
	\frac{k}{\log (k+1)}\log M
	\]
	terms.
	By choosing $\rho, M$ properly the upper density of the new sequence can be arbitrarily close to the original one.
\end{proof}
With the same argument we can reduce the case to lower natural density as well.
\begin{lma}\label{lm1.2}
	If any sequence with positive lower natural density contains arbitrarily long arithmetic progressions, then the same holds if we only require the sequence to have upper natural density.
\end{lma}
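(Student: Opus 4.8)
The plan is to run the same kind of reduction as in Lemma \ref{lm1}: starting from a set $A\subset\mathbb{N}$ with $\ud(A)=\alpha>0$, I would build a single set $B\subset\mathbb{N}$ with positive \emph{lower} natural density such that every long arithmetic progression in $B$ forces a comparably long progression in $A$. Granting the lower-density form of \sz's theorem (the hypothesis), $B$ then contains arbitrarily long progressions, and these transfer back to $A$, which is exactly the conclusion we want.

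The one point where the present situation is genuinely different from Lemma \ref{lm1} is that a lower-density bound cannot tolerate an arbitrary dense window: if the digits $1$ of a window sit near its right end, inserting it produces a long low-density prefix and ruins any $\liminf$ estimate. So I would first pass to windows that are dense on \emph{every} prefix. Fix $\beta\in(0,\alpha)$. Since $\limsup_N A(N)/N=\alpha$, there are infinitely many scales $N$ with $A(N)\ge\tfrac{\alpha+\beta}{2}N$; for each such $N$ put $f(x)=A(x)-\beta x$ and let $m$ be a minimiser of $f$ on $[0,N]$. As $f(0)=0$ we get $f(m)\le 0$, hence $A(x)-A(m)\ge\beta(x-m)$ for every $m<x\le N$, so the shifted window $V=(A\cap(m,N])-m\subset[1,L]$, with $L=N-m$, satisfies
\[
\#(V\cap[1,t])\ge\beta t\qquad\text{for all }1\le t\le L .
\]
Comparing $f(N)\ge\tfrac{\alpha-\beta}{2}N$ with $f(m)\le 0$ and using $A(N)-A(m)\le L$ also gives $L\ge cN$ with $c=\tfrac{\alpha-\beta}{2(1-\beta)}>0$, so these prefix-dense windows can be made as long as we please.

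Next I would choose scales $N_1<N_2<\cdots$ increasing fast enough that the resulting window lengths obey $L_{i+1}\ge M L_i$ for a large fixed $M$ (possible because $L_i\ge cN_i$ and the $N_i$ are unbounded), and concatenate the windows $V_1,V_2,\dots$ consecutively into $B$, letting block $i$ occupy $(W_{i-1},W_i]$ with $W_i=\sum_{s\le i}L_s$. The prefix property makes the lower-density bound telescope with no dips: for $n=W_{i-1}+t$ one has $B(n)=B(W_{i-1})+\#(V_i\cap[1,t])\ge\beta W_{i-1}+\beta t=\beta n$, so $\underline{d}(B)\ge\beta>0$. For the transfer of progressions I would argue as in Lemma \ref{lm1}: given a $k$-term progression in $B$ with difference $d$, let $i_0$ be the block of its last term; the terms lying in earlier blocks are confined to $(0,W_{i_0-1}]$ with $W_{i_0-1}\le L_{i_0}/(M-1)$, and comparing their count with the count inside block $i_0$ shows that block $i_0$ alone contains at least $(k-1)\tfrac{M-1}{M}$ of the terms once $M$ is large. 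Shifting that sub-progression back by $m_{i_0}$ yields a progression of the same length inside $A$; hence if $A$ avoided all $P$-term progressions, $B$ would avoid progressions of some bounded length, contradicting the hypothesis applied to $B$.

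I expect the only real obstacle to be the lower-density control, which is precisely why the minimiser trick producing prefix-dense windows is the heart of the argument; once such windows are in hand, the concatenation estimate and the van der Waerden–style counting go through as in Lemma \ref{lm1}. A minor bookkeeping point to verify is the degenerate case in which almost every block meets the progression in a single term, but the geometric growth of the $L_i$ rules this out just as before.
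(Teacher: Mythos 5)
Your construction of $B$ is correct and in fact handles the lower-density estimate more cleanly than the paper does. The paper takes windows of density at least $\rho c$ whose lengths grow by a factor of exactly $2$, and accepts that the running density of the concatenation dips to about $\rho c/3$ between blocks; your minimiser trick with $f(x)=A(x)-\beta x$ produces windows that are $\beta$-dense on \emph{every} prefix, so the estimate $B(n)\geq \beta n$ telescopes exactly and you are free to let the block lengths grow by an arbitrarily large factor $M$. The length bound $L\geq cN$ with $c=\tfrac{\alpha-\beta}{2(1-\beta)}$ and the verification $\underline{d}(B)\geq\beta$ are both correct. This part is a genuine (and worthwhile) improvement in bookkeeping over the paper's argument.

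The transfer step, however, contains a false claim. It is not true that the last block $i_0$ met by a $k$-term progression must contain at least $(k-1)\tfrac{M-1}{M}$ of its terms. Consider the progression $1,2,\dots,k$ with $W_{i_0-1}=k-1$: this is perfectly consistent with $W_{i_0-1}\leq L_{i_0}/(M-1)$, yet only the single term $k$ lies in block $i_0$ while $k-1$ terms lie in earlier blocks. The obstruction is that you have no lower bound on the common difference $d$, so your bound on the number of terms outside block $i_0$, which is essentially $W_{i_0-1}/d+1$, can be as large as $k$. What is true, and what the argument of Lemma \ref{lm1} actually gives, is that the progression can meet at most $Q+1=O(\log_M k)$ blocks (a fully traversed block of length $M^sL_i$ costs on the order of $M^s$ terms when $d$ is at most comparable to $L_i$, and the case of large $d$ is handled separately as in the paper), so by pigeonhole \emph{some} block, not necessarily the last one, contains at least $k/O(\log_M k)$ terms of the progression; this still tends to infinity with $k$, translates back to a progression in $A$, and finishes the proof. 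So your argument goes through once the claim about block $i_0$ is replaced by this pigeonhole over all blocks, but as written the quantitative assertion about the last block is wrong.
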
 
\begin{proof}
	Let $0<\rho<1$ be a real numbers Assume $A$ has upper natural density $c>0$, then we can find a number $N_1>0$ such that
	\[
	\#|A\cap [1,N_1]|\geq \rho c N_1.
	\]
	We call interval $A\cap [1,N_1]$ as $I_1$.
	We can find a $N_2>2 N_1$ such that $\#|A\cap [N_1+1,N_1+N_2+1]|\geq \rho c (N_2-N_1).$ Therefore there is at least one interval of length equal to $2 N_1$ and the density of $A$ on this interval is at least $\rho c$. We now relabel $N_2=2N_1$ and call this interval $I_2$.
	We can go on finding intervals $I_k$ such that the length of $I_k$ is $2$ times that of $I_{k-1}$ and densities on each interval are bounded below by $\rho c.$ 
	Now we construct a new sequence $B$ connecting $I_k$ together. Namely,
	\[
	B_i=A_i, i\in\{1,N_1\}.
	\]
	Then we put a copy of $I_2$
	\[
	B_i=A_{i}, i\in\{N_1+1,N_1+N_2+1\}.
	\]
	The rest can be done inductively.
	Now we see that:
	\[
	\frac{\#|B\cap [1,N]|}{N}
	\]
	when $N$ are in the $k$-th translated interval $I_k$ changes between
	\[
	c\rho \text{ and } c\rho/3.
	\]
	So $B$ has positive lower natural density at least $\rho c/3$ therefore contains arbitrarily long arithmetic progressions. Then similar argument as in previous lemma leads us the conclusion. 
\end{proof}
\section{Acknowledgement}
This manuscript was written when the author was visiting Institut Mittag-Leffler during the program 'Fractal Geometry and Dynamics'.

\providecommand{\bysame}{\leavevmode\hbox to3em{\hrulefill}\thinspace}
\providecommand{\MR}{\relax\ifhmode\unskip\space\fi MR }
% \MRhref is called by the amsart/book/proc definition of \MR.
\providecommand{\MRhref}[2]{%
	\href{http://www.ams.org/mathscinet-getitem?mr=#1}{#2}
}
\providecommand{\href}[2]{#2}


\begin{thebibliography}{{Sze}75}
	
	\bibitem[BP16]{BP}
	C. Bishop and Y. Peres, \emph{Fractals in probability and analysis},
	Cambridge Studies in Advanced Mathematics, Cambridge University Press, 2016.
	
	\bibitem[EV66]{EV}
	P. Erd\H{o}s and B. Volkmann, \emph{Additive Gruppen mit vorgegebener
		hausdorffscher Dimension}, Journal {f\"u}r Die Reine und Angewandte
	Mathematik \textbf{221} (1966), 203--208.
	
	\bibitem[F04]{Fa}
	K. Falconer, \emph{Fractal geometry: Mathematical foundations and
		applications}, Wiley, 2004.
	
	\bibitem[FFJ15]{Fa2}
	K. Falconer, J. Fraser and X. Jin, \emph{Sixty Years of Fractal Projections},  
	Fractal Geometry and Stochastics \textbf{V}, Birkh\"{a}user, Progress in Probability, (2015), (Eds. C. Bandt, K. Falconer and M. Z\"{a}hle).

	
	\bibitem[FY17]{FY1}
	J. Fraser and H. Yu, \emph{Arithmetic patches, weak tangents, and dimension},
	Bulletin of the London Mathematical Society \textbf{50}(2087), 85-95.
	
	\bibitem[FHY18]{FHY}
	J. Fraser, D. Howroyd, and H. Yu, \emph{Dimension growth for iterated sumsets},
	Preprint: https://arxiv.org/abs/1802.03324 (2018).
	
	\bibitem[FKY17]{FY2}
	J. Fraser, K. Saito, and H. Yu, \emph{Dimensions of sets which uniformly avoid
		arithmetic progressions}, International Mathematics Research Notices (2017),
	rnx261.
	
	\bibitem[FKO82]{Fu}
	H. Furstenberg, Y. Katznelson, and D. Ornstein, \emph{The ergodic theoretical
		proof of Szemeredi's theorem}, Bulletin of the American Mathematical Society
	\textbf{7} (1982), 527--552.
	
	\bibitem[G01]{Go}
	W. Gowers, \emph{A new proof of Szemeredi's theorem}, Geometric and
	Functional Analysis GAFA \textbf{11} (2001), 465--588.
	
	
	\bibitem[H63]{CH}
	W. Hoeffding, \emph{Probability inequalities for sums of bounded random
		variables}, Journal of the American Statistical Association \textbf{58}
	(1963), no.~301, 13--30.
	
	\bibitem[LMP99]{LMP}
	E. Lindenstrauss, D. Meiri, and Y. Peres, \emph{Entropy of convolutions on the
		circle}, Annals of Mathematics \textbf{149} (1999), no.~3, 871--904.
	
	\bibitem[M99]{Ma}
	P. Mattila, \emph{Geometry of sets and measures in Euclidean spaces: Fractals
		and rectifiability}, Cambridge Studies in Advanced Mathematics, Cambridge
	University Press, 1999.
	
	\bibitem[M15]{Ma2}
	P. Mattila, \emph{Fourier analysis and Hausdorff dimension}, Cambridge Studies in
	Advanced Mathematics, Cambridge University Press, 2015.
	
	
	\bibitem[{S}75]{SZ}
	E. {Szemer\'edi}, \emph{{On sets of integers containing no $k$ elements in
			arithmetic progression.}}, {Acta Arithmetica} \textbf{27} (1975), 199--245
	(English).
	
	\bibitem[vdW27]{VW}
	van~der Waerden, \emph{Beweis einer baudetschen Vermutung}, Nieuw Arch. Wisk
	(1927), 212--216.
	
\end{thebibliography}
\end{document}